\newtheorem{prop}{Proposition}[section]
\newtheorem{cor}[prop]{Corollary}
\newtheorem{remark}[prop]{Remark}
\newtheorem{thm}[prop]{Theorem}
\renewcommand{\geq}{\geqslant}
\def\leq{\leqslant}
\newcommand{\N}{\mathbb{N}}
\newcommand{\R}{\mathbb{R}}
\newcommand{\E}{\mathbb{E}}
\newcommand{\eq}{\begin{equation}}
\newcommand{\qe}{\end{equation}}
\begin{document}

\title{Regularity of solutions of the Stein equation and rates in the multivariate central limit theorem} 


\author{T. O. Gallouet}\address{Thomas O. Gallou\"et: Inria, Project team  MOKAPLAN and Mathematics department, Universit\'e de Li\`ege, Belgium, (\href{mailto:thomas.gallouet@inria.fr}{\tt thomas.gallouet@inria.fr})}   
\author{G. Mijoule}\address{Guillaume Mijoule: Inria, Project team  MOKAPLAN and Mathematics department, Universit\'e de Li\`ege, Belgium, (\href{mailto: guillaume.mijoule@gmail.com}{\tt guillaume.mijoule@inria.fr})}   
\author{Y. Swan}\address{Yvik Swan: Mathematics department, Universit\'e de Li\`ege, Belgium, (\href{mailto: yswan@ulg.ac.be}{\tt yswan@ulg.ac.be})}   

\begin{abstract}
Consider the multivariate Stein equation
$\Delta f - x\cdot \nabla f = h(x) - \E h(Z)$, where $Z$ is a
standard $d$-dimensional Gaussian random vector, and let $f_h$ be
the solution given by Barbour's generator approach. We prove that,
when $h$ is $\alpha$-H\"older ($0<\alpha\leq1$), all derivatives of
order $2$ of $f_h$ are $\alpha$-H\"older {\it up to a $\log$ factor}; in particular they are $\beta$-H\"older for all
$\beta \in (0, \alpha)$, hereby improving existing regularity
results on the solution of the multivariate Gaussian Stein
equation. For $\alpha=1$, the regularity we obtain is optimal, as shown by an example given by Rai\v{c} \cite{raivc2004multivariate}. As an application, we prove a near-optimal Berry-Esseen
bound of the order $\log n/\sqrt n$ in the classical multivariate
CLT in $1$-Wasserstein distance, as long as the underlying random
variables have finite moment of order $3$. When only a finite moment
of order $2+\delta$ is assumed ($0<\delta<1$), we obtain the
optimal rate in $\mathcal O(n^{-\frac{\delta}{2}})$. All constants
are explicit and their dependence on the dimension $d$ is studied
when $d$ is large.
\end{abstract}

\maketitle

\noindent
{\small {\bf Keywords.}
Berry-esseen bounds; Stein's method; Elliptic regularity; 
\vspace{5pt}

\noindent
{\bf AMS subjects classification. }
}

\section{Introduction}

\subsection{Multivariate Stein's method}

Stein's method is a powerful tool for estimating distances between
probability distributions. It first appeared in \cite{stein1972bound},
where the method was introduced for the purpose of comparison with a
(univariate) Gaussian target. The idea, which still provides the
backbone for the contemporary instantiations of the method, is as
follows. If $Z$ is a standard Gaussian random variable, then 
\begin{equation}
\label{eq:1}
\E[f'(Z)-Zf(Z)]=0
\end{equation}
for all absolutely continuous functions $f$ with
$\E|f'(Z)|<+\infty$. Let $X$ be another random variable, and consider
the integral probability distance between the laws of $X$ and $Z$
given by
\begin{equation}
\label{eq:defdistance}
d_{\mathcal H}(X,Z) = \sup_{h\in \mathcal H} \E[h(X)-h(Z)],
\end{equation}
for $\mathcal H$ a class of tests functions which are integrable with
respect to the laws of both $X$ and $Z$. Many classical distances admit a
representation of the form \eqref{eq:defdistance}, including the
Kolmogorov (with $\mathcal{H}$ the characteristic functions of
half-lines), total variation (with $\mathcal{H}$ the characteristic
functions of Borel sets), and $1$-Wasserstein a.k.a.\ Kantorovitch
(with $\mathcal{H}$ the 1-Lipschitz real functions) distances; see
e.g.\ \cite{muller1997integral}. Letting $\omega$ denote the standard Gaussian
pdf, we define for every $h \in \mathcal{H}$ the function
\begin{equation}\label{eq:5}
f_h(x) = \frac{1}{\omega(x)}\int_{-\infty}^x \left( h(y) - \E h(Z)
\right) \omega(y) dy.
\end{equation}
This function is a solution to the ODE (called a \emph{Stein
equation})
\begin{equation}
\label{eq:steinclassic}
f_h'(x)-xf_h(x) = h(x) - \E h(Z), \quad x \in \R,
\end{equation}
which allows to rewrite integral probability metrics
\eqref{eq:defdistance} as
$$d_{\mathcal{H}}(X,Z) = \sup_{h\in \mathcal H}
\E[f_h'(X)-Xf_h(X)]. $$ 
Stein's intuition was to exploit this
last identity to estimate the distance between the laws of $X$ and
$Z$. One of the reasons for which the method works is the fact that
the function $f_h$ defined in \eqref{eq:5} enjoys many regularity
properties. For instance, one can show (see e.g.\ \cite[pp.\
65-67]{nourdin2012normal}) that if $h$ is absolutely continuous then
\begin{equation}
\label{eq:4}
\|f_h\|_{\infty}\le 2\|h'\|_{\infty}, \quad \|f_h'\|_{\infty}\le
\sqrt{2/\pi}\|h'\|_{\infty} \mbox{ and } \|f_h''\|_{\infty}\le 2\|h'\|_{\infty},
\end{equation}
$\| \, . \, \|_\infty$ holding for the supremum norm. This offers a
wide variety of handles on $d_{\mathcal{H}}(X, Z)$ -- typically via
low order Taylor expansion arguments -- for all important choices of
test functions $\mathcal{H}$ and under weak assumptions on $X$. This
observation has been used, for instance, to obtain Berry-Esseen-type
bounds in the classical central limit theorem in 1-Wasserstein
distance, Kolmogorov or total variation distances, see
\cite{chen2010normal,nourdin2012normal}.

Consider now a $d$-dimensional Gaussian target
$Z \sim \mathcal N(0,I_d)$. The $d$-dimensional equivalent to
identity \eqref{eq:1} was identified in
\cite{barbour1990stein,gotze1991rate} as 
$$\E [ \Delta f(Z) - Z \cdot \nabla f(Z)] = 0,$$
which holds for a ``large class'' of functions
$f \; : \, \R^d \rightarrow \R$ ($x\cdot y$ denotes the usual scalar product
between vectors $x,y\in\R^d$). We will define the ``large class'' of
functions precisely in Proposition~\ref{prop:regul-solut-steins}
below. For $h$ a function with finite Gaussian mean, the multivariate
Stein equation then reads
\begin{equation}
\label{eq:steinmulti}
\Delta f(x) - x \cdot \nabla f(x) = h(x) - \E h(Z), \quad x \in \R^d.
\end{equation}
Note that \eqref{eq:steinmulti} is a second order equation in the
unknown function $f$; in dimension $d=1$, \eqref{eq:steinmulti}
reduces to $f''(x) - x f'(x) = h(x) - \E h(Z)$ which is obtained by
applying \eqref{eq:steinclassic} to $f'$. Barbour
\cite{barbour1990stein} identified a solution 
of \eqref{eq:steinmulti} to be
\begin{equation}
\label{eq:barboursolution}
f_h(x) = -\int_0^1 \frac{1}{2t} \E [h (\sqrt t x + \sqrt{1-t} Z)
-h(Z)]\;dt,
\end{equation} 
and the same argument as in the 1-dimensional setting leads to the
identity
\begin{equation}\label{eq:6}
d_{\mathcal{H}}(X, Z) = \sup_{h \in \mathcal{H}} \mathbb{E} \left[\Delta f_h(X)
- X \cdot \nabla 
f_h(X) \right],
\end{equation}
which is the starting point for multivariate Gaussian approximation
via Stein's method. The explicit representation
\eqref{eq:barboursolution} is suitable to obtain regularity properties
of $f_h$ in terms of those of $h$; for instance (see e.g.\ \cite[Lemma
2.6]{reinert2009multivariate}) it is known that if $h$ is $n$ times
differentiable then $f_h$ is $n$ times differentiable and
\begin{equation}\label{eq:3}
\left| \frac{\partial^k f_h(x)}{\prod_{j=1}^k \partial x_{i_j}} \right|
\le \frac{1}{k} \left| \frac{\partial^kh(x)}{\prod_{j=1}^k \partial x_{i_j}} \right|, 
\end{equation}
for every $x \in \R^d$. Hence, contrarily to the univariate case where
first order assumption on $h$ was sufficient to deduce second order
regularity for $f_h$ (recall \eqref{eq:4}), a bound such as
\eqref{eq:3} only shows the same regularity for $h$ and $f_h$. In most
practical implementations of the method, however, Taylor
expansion-type arguments are used to obtain the convergence rates from
the rhs of \eqref{eq:6}; hence regularity of $f_h$ is necessary in
order for the argument to work. This restricts the choice of class
$\mathcal{H}$ in which the statements are made and therefore weakens
the strength -- be it only in terms of the choice of distance -- of
the resulting statements.

An important improvement in this regard is due to Chatterjee and
Meckes \cite{chatterjee2008multivariate} who obtained (among other
regularity results) that
\begin{equation}
\label{eq:boundchatterjee}
\underset{x\in\R^d}{\sup} \left\|\nabla^2 f_h(x)\right\|_{H.S} \leq
\|\nabla h\|_\infty, 
\end{equation}
($\| M \|_{H.S.}$ stands for the Hilbert-Schmidt norm of a matrix $M$
and $\nabla^2 f_h$ for the Hessian of $f_h$); see also
\cite{reinert2009multivariate}. Gaunt \cite{gaunt2016rates} later
showed a generalization of this result, namely a version of
\eqref{eq:3} where the derivatives of order $k$ of $f_h$ can be
bounded by derivatives of order $k-1$ of $h$. This still does not
concur with the univariate case as we know that, in this case and when
$h'$ is bounded, one can bound one higher derivative of $f_h$: indeed,
it holds $|f_h^{(3)}| \leq 2 |h'|$ (here the function $f_h$ is the
solution \eqref{eq:barboursolution} to the univariate version of the
second order equation \eqref{eq:steinmulti}). This loss of regularity
is, however, not an artefact of the method of proof but is inherent to
the method itself: Rai\v{c} \cite{raivc2004multivariate} exhibits a
counterexample, namely a Lipschitz-continuous function such that the
second derivative of $f_h$ is \emph{not} Lipschitz-continuous. We will
discuss this example in detail later on.

\subsection{Multivariate Berry-Esseen bounds}

Let $(X_i)_{i\geq 1}$ be an i.i.d.\ sequence of random vectors in
$\R^d$, and for simplicity take them centered with identity covariance
matrix. Let $W = n^{-1/2}\sum_{i=1}^n X_i$, $Z \sim \mathcal N(0,I_d)$
and consider the problem of estimating $ D(W, Z)$, some probability
distance between the law of $Z$ and that of $W$. According to
\cite{gotze1991rate}, the earliest results on this problem in
dimension $d\ge2$ concern distances of the form \eqref{eq:defdistance}
with $\mathcal{H}$ indicator functions of measurable convex sets in
$\R^d$ (which is a multivariate generalization of the Kolmogorov
distance). The best result for this choice of distance is due to
\cite{bentkus2003dependence} where an estimate of the form
$d_{\mathcal{H}}(W, Z) \le 400\, d^{1/4} n^{-1/2} E[|X_1|^3]$ is shown
($|\cdot|$ is the Euclidean norm); the
dependence on the dimension is explicit and the best available for
these moment assumptions and this distance.  More recently, a high
dimensional version of the same problem was studied in
\cite{chernozhukov2017central}, with $\mathcal{H}$ the class of
indicators of hyper-rectangles in $\R^d$; we also refer to the latter
paper for an extensive and up-to-date literature review on such
results.

Another important natural family of probability distances are the
Wasserstein distances of order $p$ (a.k.a.\ Mallows distances) defined
as
\begin{equation}
\label{eq:2}
\mathcal{W}_p(W, Z) = \left(\inf \mathbb{E} \left[ |X_1 - Y_1|^p \right]\right)^{1/p}
\end{equation}
where the infimum is taken over all joint distributions of the random
vectors $X_1$ and $Y_1$ with respective marginals the laws of $W$ and
$Z$. Except in the case $p=1$, such distances cannot be written under
the form \eqref{eq:defdistance}; as previously mentionned, when $p=1$
the distance $\mathcal{W}:=\mathcal{W}_1$ in \eqref{eq:2} is of the form
\eqref{eq:6} with $\mathcal{H}$ the class of Lipschitz function with
constant 1.  Because
$\mathcal{W}_p(\cdot, \cdot) \ge \mathcal{W}_{p'}(\cdot, \cdot)$ for
all $p \ge p'$, bounds in $p$-Wasserstein distance are stronger than
those in $p'$-Wasserstein distance; in particular
$\mathcal{W}_p(\cdot, \cdot) \ge \mathcal{W}_1(\cdot, \cdot)$ for all
$p \ge1$. We refer to \cite{villani2003topics} for more information on
$p$-Wasserstein distances.  CLT's in Wasserstein distance have been
studied, particularly in dimension 1, where we refer to the works
\cite{bobkov2013entropic,rio2009upper,rio2011asymptotic} as well as
\cite{bobkov2018berry} (and references therein) for convergence rates
in $p$-Wasserstein for all $p \ge 1$ under the condition of existence
of moments of order $2+p$; in all cases the rate obtained is of
optimal order $\mathcal{O}(1/\sqrt n)$.  In higher dimensions, results
are also available in 2-Wasserstein distance, under more stringent
assumptions on the $X_i$.  For instance, Zhai
\cite{zhai2016multivariate} shows that when $X_i$ is almost surely
bounded, then a near-optimal rate of convergence in
$\mathcal O(\log n / \sqrt n)$ holds (this improves a result by
Valiant et al.\ \cite{valiant2010clt}). More recently, Courtade et
al.\ \cite{courtade2017existence} attained the optimal rate of
convergence $\mathcal O(n^{-1/2})$, again in Wasserstein distance of
order 2, under the assumption that $X_i$ satisfies a Poincar\'e-type
inequality; see also \cite{fathi2018stein} for a similar result under
assumption of log-concavity. 
Finally we mention the work of Bonis \cite{Bonis:2015aa} where similar
estimates are investigated (in Wasserstein-2 distance) under moment
assumptions only; dependence of these estimates on the dimension is
unclear (see \cite[page 12]{courtade2017existence}).

One of the key ingredients in many of the more recent above-mentioned
references is the multivariate Stein's method.  Rates of convergence
in the multivariate CLT were first obtained Stein's method by
Barbour in \cite{barbour1990stein} (see also G\"otze \cite{gotze1991rate})
whose methods (which rest on viewing the normal distribution as the
stationary distribution of an Ornstein-Uhlenbeck diffusion, and using
the generator of this diffusion as a characterizing operator) led to
the so-called \emph{generator approach to Stein's method} with
starting point equation \eqref{eq:steinmulti} and its solution given
by the classical
formula~\eqref{eq:barboursolution}. 
Such an approach readily provides rates of convergence in {\it
  smooth}-$k$-Wasserstein distances, i.e.\ integral probability
metrics of the form \eqref{eq:defdistance} with $\mathcal
H$ (=
$\mathcal{H}_{(k)}$) a set of smooth functions with derivatives up to
some order $k$ bounded by $1$. Of course, the smaller the order
$k$, the stronger the distance; in particular the case
$k=1$ coincides with the classical
$1$-Wasserstein distance (and therefore also \eqref{eq:2} with
$p=1$).  In \cite[Theorem 3.1]{chatterjee2008multivariate} it is
proved that if $X_i$ has a finite moment of order
$4$ then, for any smooth $h$,
\begin{equation}
\label{eq:7}
\E[h(W)-h(Z)] \leq \frac{1}{\sqrt n} \left( \frac{1}{2}
\sqrt{\E|X_i|^4-d}\;\|\nabla h\|_\infty + \frac{\sqrt{2\pi}}{3}
\E|X_i|^3\underset{x\in\R^d}{\sup} \left\| \nabla^2 h(x)
\right\|_{op}\right),
\end{equation}
where $\|M\|_{op}$ denotes the operator norm of a matrix $M$ and
$|x|$ the Euclidean norm of a vector $x\in\R^d$. The rate $\mathcal
O(1/\sqrt
n)$ is optimal. The fourth moment conditions are not optimal, nor is
the restriction to twice differentiable test functions which implies
that \eqref{eq:7} does \emph{not} lead to rates of convergence in the
1-Wasserstein distance. Similarly, the bounds on 2-Wasserstein
distance recently obtained in \cite{courtade2017existence,
  fathi2018stein} are inspired by concepts related to Stein's method
which were introduced in \cite{ledoux2015stein}; such an approach
necessarily requires regularity assumptions on the density of the
$X_i$. Hence no simple extension of their approach can lead to rates
of convergence in Wasserstein distance with only moment conditions on
the
$X_i$ (and in particular no smoothness assumptions on the densities).
In other words, no optimal rates of convergence in Wasserstein
distance are available under moment assumptions, and they seem out of reach if based on current 
available regularity results of Stein's equation.

\subsection{Contribution}

In this paper, we study Barbour's solution \eqref{eq:barboursolution}
to the Stein equation \eqref{eq:steinmulti} and prove new regularity
results: namely, if $h$ is $\alpha$-H\"older for some
$0<\alpha\leq 1$, then for all $i,j$,
$\frac{\partial^2 f_h}{\partial x_i \partial x_j}$ is $\beta$-H\"older
for $0<\beta<\alpha$. Actually, we show the stronger estimate
\begin{equation}
\label{eq:logregularity}
\left | \frac{\partial^2 f_h}{\partial x_i \partial x_j} (x) -
  \frac{\partial^2 f_h}{\partial x_i \partial x_j} (y) \right| =
\mathcal O \left( |x-y|^\alpha \log |x-y| \right), 
\end{equation}
for $|x-y|$ small. A precise statement, with explicit constants (which
depend on $\alpha$ and on the dimension $d$), is given in Proposition
\ref{lem:ellipticregularity}.
Note that from
Shauder's theory, in the multivariate case (and contrary to the
univariate one), one cannot hope in general for the second derivative
of $f_h$ to inherit the Lipschitz-regularity of $h$. Actually, Rai\v{c}
\cite{raivc2004multivariate} gives a counter-example: if
$$h(x,y) = \max\{\min\{x,y\} ,0\},$$
then $f_h$ defined by \eqref{eq:barboursolution} is twice differentiable but $\frac{\partial^2 f_h}{\partial x \partial y}$ is not Lipschitz (whereas $h$ is). We study this example in more detail in Proposition \ref{prop:raic}, which shows that (at least for $\alpha=1$), the regularity \eqref{eq:logregularity} cannot be improved in general. 

In a second step, we apply those regularity results to estimate the rate of convergence in the CLT, in Wasserstein distance.

\begin{thm}
\label{thm:main}
Let $(X_i)_{i\geq 1}$ be an i.i.d.\ sequence of random vectors with unit covariance matrix, and $Z \sim \mathcal N(0,I_d)$. Assume that there exists $\delta \in (0,1)$ such that $\E [|X_i|^{2+\delta}] < \infty$. Then
$$\mathcal W\left(n^{-1/2} \sum_{i=1}^n X_i, Z\right) \leq \frac{ 1}{n^{\frac{\delta}{2}}} \left[ (K_1+2\,(1-\delta)^{-1})\E|X_i|^{2+\delta} +(K_2+2d\,(1-\delta)^{-1}) \E|X_i|^{\delta} \right],$$
where $\mathcal W$ stands for the $1$-Wasserstein distance, and
\begin{align*}
K_1 &= 2^{3/2} \frac{2d+1}{ d}\frac{\Gamma(\frac{1+d}{2})}{\Gamma(d/2)}\\
K_2 &= 2\sqrt{\frac 2 \pi} \sqrt d.
\end{align*}
\end{thm}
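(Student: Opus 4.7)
The plan is to bound, for an arbitrary $1$-Lipschitz test function $h$, the quantity $\E[h(W)-h(Z)]$ by combining the Stein identity \eqref{eq:6} with the regularity result of Proposition \ref{lem:ellipticregularity}, then take the supremum over $h$. Fix such an $h$, let $f_h$ denote Barbour's solution \eqref{eq:barboursolution}, and write $W^{(i)} = W - X_i/\sqrt n$. The standard leave-one-out decomposition gives
$$\E[h(W)-h(Z)] = \E[\Delta f_h(W)] - \frac{1}{\sqrt n}\sum_{i=1}^n \E\bigl[X_i \cdot \nabla f_h(W)\bigr].$$
Since $X_i$ is independent of $W^{(i)}$ and centered, $\E[X_i \cdot \nabla f_h(W^{(i)})]=0$, so I would subtract this and Taylor-expand $\nabla f_h$ along the segment from $W^{(i)}$ to $W$, obtaining
$$\E\bigl[X_i \cdot \nabla f_h(W)\bigr] = \frac{1}{\sqrt n}\E\bigl[X_i^T \nabla^2 f_h(W^{(i)}) X_i\bigr] + \frac{1}{\sqrt n}\E\!\left[X_i^T\!\int_0^1\!\bigl(\nabla^2 f_h(W^{(i)} + tX_i/\sqrt n) - \nabla^2 f_h(W^{(i)})\bigr) X_i\, dt\right].$$

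The unit-covariance assumption together with independence turns the first term into $\E[\Delta f_h(W^{(i)})]/\sqrt n$. Summing over $i$ and using that the $X_i$ are identically distributed gives
$$\E[h(W)-h(Z)] = \E[\Delta f_h(W) - \Delta f_h(W^{(1)})] - R_n,$$
where $R_n$ collects the Taylor remainders. To control both pieces I would invoke Proposition \ref{lem:ellipticregularity}, which provides a log-H\"older modulus of continuity for $\nabla^2 f_h$ (with $\|h\|_{\mathrm{Lip}}=1$) featuring the constants $K_1$ for the Hessian and $K_2$ (augmented by a factor $d$ after summing diagonal entries) for the Laplacian. Converting this log-H\"older estimate into an honest $\delta$-H\"older bound costs the factor $2(1-\delta)^{-1}$: for small increments this comes from maximizing $r \mapsto r^{1-\delta}\log(1/r)$ on $(0,1)$, while for large increments one uses the uniform bound on $\nabla^2 f_h$ together with the trivial inequality $1 \leq |x-y|^{\delta}$.

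With the $\delta$-H\"older bound in hand, since $W - W^{(1)} = X_1/\sqrt n$, the main term is bounded by a constant times $n^{-\delta/2}\,\E|X_1|^{\delta}$, producing the second bracketed expression in the theorem. For the remainder, the pointwise estimate $|X_i^T \bigl(\nabla^2 f_h(W^{(i)}+tX_i/\sqrt n) - \nabla^2 f_h(W^{(i)})\bigr)X_i| \lesssim |X_i|^2 (|X_i|/\sqrt n)^{\delta}$ yields, after summing over $i$ and dividing by $\sqrt n$, a contribution of order $n^{-\delta/2}\,\E|X_1|^{2+\delta}$, which is the first bracketed expression. Taking the supremum over $1$-Lipschitz $h$ then produces the stated Wasserstein estimate.

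The main obstacle will be tracking the exact constants through this argument --- specifically, isolating the smooth contribution (which recovers $K_1$ and $K_2$) from the degradation $2(1-\delta)^{-1}$ incurred when passing from log-H\"older to $\delta$-H\"older, and doing so in a way that respects the dimension dependence (in particular the extra factor $d$ multiplying $(1-\delta)^{-1}$ in the $\E|X_1|^{\delta}$ term, arising from the $d$ diagonal entries of the Hessian). A subsidiary technical point is that the Taylor-type expansion requires $\nabla^2 f_h$ to be continuous along the relevant segments, which is guaranteed by Proposition \ref{lem:ellipticregularity}, but the argument must treat both small and large increments in a unified way.
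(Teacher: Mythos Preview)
Your approach is essentially the paper's own: leave-one-out, Taylor expansion of $\nabla f_h$ between $W^{(i)}$ and $W$, use of the unit covariance to rewrite $\E[X_i^T\nabla^2 f_h(W^{(i)})X_i]$ as $\E[\Delta f_h(W^{(i)})]$, and then application of the $\beta$-H\"older bounds (with $\beta=\delta$, $\alpha=1$) on the Hessian and Laplacian increments.

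One correction is needed on the constants. Proposition~\ref{lem:ellipticregularity} only controls $\|\nabla^2 f_h(x)-\nabla^2 f_h(y)\|_{op}$ and delivers the constant $K_1=C_1(1,d)$; it does \emph{not} yield $K_2$. If you bound the Laplacian increment by ``summing diagonal entries'' of the Hessian, the best you get from the operator norm is $|\Delta f_h(x)-\Delta f_h(y)|\le d\,\|\nabla^2 f_h(x)-\nabla^2 f_h(y)\|_{op}$, which produces $d\cdot K_1$ rather than $K_2=2\sqrt{2/\pi}\,\sqrt d$. The paper instead invokes the separate Laplacian estimate of Proposition~\ref{prop:regu2} (specifically \eqref{eq:holderlaplacian} with $\alpha=1$, $\beta=\delta$), whose proof exploits one fewer Gaussian integration by parts and thereby achieves the sharper $K_2$. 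With that substitution your argument goes through and matches the paper exactly; the $2(1-\delta)^{-1}$ and $2d(1-\delta)^{-1}$ factors are already built into \eqref{eq:lemma} and \eqref{eq:holderlaplacian}, so no separate optimization of $r\mapsto r^{1-\delta}\log(1/r)$ is needed.
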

Note that the rate in $\mathcal O (n^{-\delta/2})$ is optimal when
only assuming moments of order $2+\delta$; see
\cite{bikjalis1966estimates} or \cite{nefedova2013nonuniform}. As
mentioned previously, the bounds in Theorem~\ref{thm:main} are to our knowledge the first optimal
rates in $1$-Wasserstein distance in the multidimensional case when
assuming finite moments of order $2+\delta$ only.

From the previous Theorem is easily derived the following Corollary,
which gives a near-optimal rate of order $\mathcal O(\log n /\sqrt n)$
when $X_i$ has finite moment of order $3$. 
\begin{cor}
\label{cor:main}
Let $(X_i)_{i\geq 1}$ be an i.i.d.\ sequence of $d$-dimensional random vectors with unit covariance matrix. Assume that $\E [|X_i|^{3}] < \infty$. Then for $n\geq 3$ ,
$$\mathcal W\left(n^{-1/2} \sum_{i=1}^n X_i, Z\right) \leq e \, \frac{ C(d)+2(1+d)\,\log n}{\sqrt n} \; \E|X_i|^3,$$
where $C(d)=2^{3/2} \frac{2d+1}{ d}\frac{\Gamma(\frac{1+d}{2})}{\Gamma(d/2)}+2\sqrt{\frac 2 \pi} \sqrt d.$
\end{cor}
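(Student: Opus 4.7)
The plan is to derive the corollary from Theorem~\ref{thm:main} by choosing $\delta \in (0,1)$ as a function of $n$, so as to balance the rate $n^{-\delta/2}$ against the blow-up of the $(1-\delta)^{-1}$ factor as $\delta \to 1$. This is a standard trick for pushing a $2{+}\delta$-moment bound to its boundary when only a third moment is available.

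For $n \geq 3$ one has $\log n > 1$, so I would take $\delta := 1 - 1/\log n \in (0,1)$. The identity $n^{1/\log n} = e$ then gives
$$n^{-\delta/2} = n^{-1/2} \cdot n^{1/(2\log n)} = \frac{e^{1/2}}{\sqrt n} \leq \frac{e}{\sqrt n},$$
while $(1-\delta)^{-1} = \log n$, so that the logarithmic coefficients in Theorem~\ref{thm:main} simplify to $2(1-\delta)^{-1} = 2\log n$ and $2d(1-\delta)^{-1} = 2d\log n$.

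To reduce the moments appearing in Theorem~\ref{thm:main} to $\E|X_i|^3$, I use Lyapunov's inequality. Since $X_i$ has unit covariance, $\E|X_i|^2 = d \geq 1$, and by Jensen's inequality applied to $t \mapsto t^{3/2}$ we have $\E|X_i|^3 \geq (\E|X_i|^2)^{3/2} \geq 1$. Then for any $0 < q \leq 3$, Lyapunov gives $\E|X_i|^q \leq (\E|X_i|^3)^{q/3} \leq \E|X_i|^3$, so in particular $\E|X_i|^{2+\delta} \leq \E|X_i|^3$ and $\E|X_i|^\delta \leq \E|X_i|^3$.

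Substituting all of the above into Theorem~\ref{thm:main} and grouping the constant and logarithmic contributions yields
$$\mathcal W\!\left(n^{-1/2}\sum_{i=1}^n X_i, Z\right) \leq \frac{e}{\sqrt n} \bigl[(K_1 + K_2) + 2(1+d)\log n\bigr]\E|X_i|^3,$$
which is exactly the claimed bound once one recognises $C(d) = K_1 + K_2$. The proof is essentially computational, with no genuine obstacle beyond verifying that the chosen $\delta$ lies in $(0,1)$ for $n \geq 3$ and tracking the constants; the only minor sleight of hand is the loosening $e^{1/2} \leq e$, used purely for cosmetic simplicity in the final expression.
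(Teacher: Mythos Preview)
Your proof is correct and follows essentially the same approach as the paper: apply Theorem~\ref{thm:main} with $\delta$ chosen close to $1$ as a function of $n$, and reduce all moments to $\E|X_i|^3$ via Lyapunov together with the observation $\E|X_i|^3 \geq d^{3/2} \geq 1$. The paper (in the proof of Corollary~\ref{cor:1}) takes $1-\delta = 2/\log n$ rather than your $1/\log n$, but this is a cosmetic variation; in fact your choice is the one that directly yields both the range $n\geq 3$ and the coefficient $2(1+d)\log n$ exactly as stated.
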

Compared to \cite{zhai2016multivariate}, our assumption on the
distribution of $X_i$ is much weaker; however the distance used in
\cite{zhai2016multivariate} is stronger and the constants are sharper
(\cite{zhai2016multivariate} obtains a constant in
$\mathcal O(\sqrt d)$). \cite{courtade2017existence} has the advantage
of stronger rate of convergence (it is optimal when ours is
near-optimal) and stronger distance, but the drawback of a less
tractable assumption on the distribution of $X_i$ (it should satisfy a
Poincar\'e or weighted Poincar\'e inequality).

\section{Regularity of solutions of Stein's equation}

Throughout the rest of the paper, for $x,y\in \R^d$, we denote by $x\cdot y$ the Euclidean scalar product between $x$ and $y$, and $|x|$ the Euclidean norm of $x$. For a matrix $M$ of size $d\times d$, its operator norm is defined as
$$\| M \|_{op} = \underset{x\in\R^d; \,|x|=1}{\sup} |Mx|.$$
Define the $\alpha$-H\"older semi-norm, for $\alpha \in (0,1]$, by 
$$
[h]_{\alpha}=\sup_{x\neq y} \frac{|h(x)-h(x)|}{|x-y|^{\alpha}}.
$$
For a multi-index $\bold{i} = ( i_1,\ldots,i_d)\in \mathbb N^d$, the
multivariate Hermite polynomial $H_{\bold{i}}$ is defined by
$$H_{\bold{i}}(x) = (-1)^{|\bold{i}|}e^{|x|^2/2} \frac{\partial^{|\bold{i}|}}{\partial x_1^{i_1}\ldots\partial x_d^{i_d}} e^{-|x|^2/2},$$
where $|\bold{i}| = i_1+\ldots+i_d$.

Let $h\; : \; \R^d \rightarrow \R$, and $f_h$ be defined by (when the integral makes sense)
\eq
f_h(x) = -\int_0^1 \frac{1}{2t} \E \,\bar h (Z_{x,t}) \;dt \label{eq:solgen},
\qe
where
$$\bar h(x) = h(x) - \E\, h(Z),$$
and
$$Z_{x,t} = \sqrt t\, x + \sqrt{1-t} \,Z.$$

Recall that, when $h$ is smooth with compact support, then \eqref{eq:solgen} defines a solution to the Stein equation \eqref{eq:steinmulti}, see \cite{barbour1990stein,chatterjee2008multivariate}. We shall prove that this is still the case when only assuming H\"older-regularity of $h$.

\begin{prop}\label{prop:regul-solut-steins} 
Let $h\; : \; \R^d \rightarrow \R$ be a $\alpha$-H\"older function; that is, $[h]_\alpha<\infty$. Let $f_h$ be the function given by \eqref{eq:solgen}. Then:
\begin{itemize}
\item $f_h$ is twice differentiable and for $\bold{i} = (i_1,\ldots,i_d) \in \N^d$ such that $1\leq|\bold{i}|\leq 2$,
\eq
\label{eq:derivfh}
\frac{\partial^{|\bold{i}|} f_h}{\partial x_1^{i_1}\ldots\partial x_d^{i_d}} = -\int_0^1 \frac{t^{\frac{|\bold{i}|}{2}-1}}{2 (1-t)^{\frac{|\bold{i}|}{2}}} \E[ H_{\bold{i}}(Z) \bar h (Z_{x,t})] dt.
\qe
\item $f_h$ is a solution to the Stein equation \eqref{eq:steinmulti}.
\end{itemize}
\end{prop}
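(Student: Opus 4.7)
The plan is to establish the derivative formula \eqref{eq:derivfh} first, after which the Stein equation follows via an Ornstein--Uhlenbeck semigroup argument. The key idea for \eqref{eq:derivfh} is to transfer all $x$-derivatives onto the smooth Gaussian kernel, so that the merely Hölder regularity of $h$ is never used through direct differentiation of $h$. Setting $g(x,t) := \E[\bar h(Z_{x,t})]$ and performing the change of variable $u = \sqrt{t}\, x + \sqrt{1-t}\, z$, one rewrites $g(x,t) = \int_{\R^d}\bar h(u)\,\phi_{1-t}(u-\sqrt{t}\,x)\,du$, where $\phi_\sigma$ denotes the density of $\mathcal N(0,\sigma I_d)$. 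The $x$-derivatives now fall on the kernel; combining the chain rule with the Rodrigues identity $\partial_z^{\bold{i}}\phi(z) = (-1)^{|\bold{i}|}H_{\bold{i}}(z)\phi(z)$ gives
$$\frac{\partial^{|\bold{i}|}}{\partial x_1^{i_1}\cdots\partial x_d^{i_d}}\phi_{1-t}(u-\sqrt{t}\,x) = \frac{t^{|\bold{i}|/2}}{(1-t)^{|\bold{i}|/2}}\,H_{\bold{i}}\!\left(\frac{u-\sqrt{t}\,x}{\sqrt{1-t}}\right)\phi_{1-t}(u-\sqrt{t}\,x),$$
and undoing the substitution yields $\partial_x^{\bold{i}}g(x,t) = t^{|\bold{i}|/2}(1-t)^{-|\bold{i}|/2}\E[H_{\bold{i}}(Z)\bar h(Z_{x,t})]$, which plugs into the definition of $f_h$ to produce \eqref{eq:derivfh} at a formal level.

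The main technical work is to justify differentiating twice under the $dt$-integral by dominated convergence, i.e.\ to dominate the integrand of \eqref{eq:derivfh} by an integrable function of $t \in (0,1)$ uniformly for $x$ in compact sets. At $t=0$ the factor $t^{|\bold{i}|/2-1}$ is integrable for $|\bold{i}|\in\{1,2\}$ and $\E[H_{\bold{i}}(Z)\bar h(Z_{x,t})]$ is easily bounded, so the delicate behaviour lies at $t=1$. The crucial observation is that since $\E[H_{\bold{i}}(Z)]=0$ for $|\bold{i}|\geq 1$, one may subtract the $Z$-constant $h(x)$ to obtain
$$\E[H_{\bold{i}}(Z)\bar h(Z_{x,t})] = \E\bigl[H_{\bold{i}}(Z)\bigl(h(Z_{x,t})-h(x)\bigr)\bigr].$$
Hölder continuity then gives $|h(Z_{x,t})-h(x)| \leq [h]_\alpha |Z_{x,t}-x|^\alpha$, which after elementary computation produces a factor of order $(1-t)^{\alpha/2}$; this tames the singular weight $(1-t)^{-|\bold{i}|/2}$ and delivers an integrable majorant. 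This zero-mean cancellation is the main obstacle of the proof.

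For the Stein equation, the change of variable $t = e^{-2s}$ rewrites Barbour's formula as $f_h(x) = -\int_0^{\infty}(P_s h(x) - \E h(Z))\,ds$, where $P_s h(x) := \E h(e^{-s}x + \sqrt{1-e^{-2s}}\,Z)$ is the OU semigroup, whose generator is precisely $\Delta - x\cdot\nabla$. Formula \eqref{eq:derivfh} combined with the identity $\sum_j H_{2e_j}(z) = |z|^2 - d$ yields by direct calculation $(\Delta - x\cdot\nabla)g(x,t) = -2t\,\partial_t g(x,t)$, equivalently the generator equation $(\Delta - x\cdot\nabla)P_s h = \partial_s P_s h$. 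Interchanging $\Delta - x\cdot\nabla$ with the $ds$-integral (justified by the same Hölder estimates as above) and applying the fundamental theorem of calculus then gives
$$(\Delta - x\cdot\nabla)f_h(x) = -\int_0^{\infty}\partial_s P_s h(x)\,ds = P_0 h(x) - \lim_{s\to\infty}P_s h(x) = h(x) - \E h(Z),$$
the boundary values $P_0 h(x) = h(x)$ and $\lim_{s\to\infty}P_s h(x) = \E h(Z)$ both being immediate consequences of the Hölder continuity of $h$.
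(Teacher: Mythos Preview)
Your argument is correct and follows essentially the same route as the paper: put the $x$-derivatives on the Gaussian kernel via Rodrigues' formula, exploit $\E H_{\bold{i}}(Z)=0$ together with the H\"older bound to gain a factor $(1-t)^{\alpha/2}$ and justify differentiation under the integral, and then derive the Stein equation from the identity $(\Delta - x\cdot\nabla)g(x,t) = -2t\,\partial_t g(x,t)$ and the fundamental theorem of calculus. The only cosmetic differences are that the paper subtracts $\bar h(\sqrt{t}\,x)$ (giving $|Z_{x,t}-\sqrt{t}\,x|=\sqrt{1-t}\,|Z|$, hence an $x$-independent majorant) whereas you subtract $h(x)$ (giving a locally uniform majorant, which is equally sufficient), and the paper obtains the key time-derivative identity directly from the heat equation $\partial_t\omega_t=\tfrac12\Delta\omega_t$ for the Gaussian kernel rather than via the OU-semigroup substitution $t=e^{-2s}$; these are equivalent computations.
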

\begin{proof}
Fix $t\in (0,1)$. Recall $\omega(x) = (2\pi)^{-d/2} e^{-|x|^2/2}$ is the density of the standard $d$-dimensional gaussian measure. Since
$$\E\, \bar h (Z_{x,t}) = \int_{\R^d} \bar h( \sqrt t\,x + \sqrt{1-t}\, z)\; \omega(z) \;dz = (-1)^dt^{d/2}(1-t)^{-d/2}\int_{\R^d} \bar h ( u )\, \omega \left( \frac{u-\sqrt t \, x}{\sqrt{1-t}} \right)\; du,$$
we have, from Lebesgue's derivation theorem, and another change of variable,
\eq
\frac{\partial^{|\bold{i}|} }{\partial x_1^{i_1}\ldots\partial x_d^{i_d}} \E\, \bar h (Z_{x,t}) = \frac{t^{\frac{|\bold{i}|}{2}}}{(1-t)^{\frac{|\bold{i}|}{2}}} \E[H_{\bold{i}}(Z) \bar h (Z_{x,t})].
\label{eq:derivhbar}
\qe
Now note that by $\alpha$-H\"older regularity, and using the fact that $\E H_{\bold{i}}(Z_1,\ldots,Z_d) = 0$,
\begin{align}
|\E H_{\bold{i}}(Z) \bar h(Z_{x,t})| &= |\E H_{\bold{i}}(Z) (\bar h(Z_{x,t}) - \bar h(\sqrt t \,x))| \nonumber\\
& \leq \E\left[|H_{\bold{i}}(Z)| \; |Z|^{\alpha} \right] (1-t)^{\alpha/2}. \label{eq:boundhihbar}
\end{align}
Thus we can apply again Lebesgue's derivation theorem to obtain \eqref{eq:derivfh}.

Now let $\omega_{t}(x) = t^{-d/2} \omega\left( \frac{x}{\sqrt{t}} \right)$; $\omega_{1-t}$ is the density of $\sqrt{1-t} \,Z$. It is well known (and can be easily checked) that $\omega_{t}$ solves the heat equation
$$\partial_t\, \omega_t = \frac{1}{2}\, \Delta\, \omega_t.$$
We deduce (again applying Lebesgue's derivation theorem, valid since $\bar h$ has polynomial growth at infinity) that
\begin{align*}
\partial_t \E\,\bar h (Z_{x,t}) &= \partial_t \int_{\R^d} \bar h (u)\, \omega_{1-t}(u-\sqrt t \,x) \,du\\
&= -\int_{\R^d} \bar h(u) \,\partial_t \omega_{1-t} (u-\sqrt t \, x) \,du - \frac{1}{2\sqrt t}\int_{\R^d} \bar h (u) \nabla \omega_{1-t}(u-\sqrt t \, x) \cdot x \,du\\
&= -\frac{1}{2}\int_{\R^d} \bar h(u)\Delta\,\omega_{1-t} (u-\sqrt t \, x) \,du - \frac{1}{2\sqrt t}\int_{\R^d} \bar h (u) \nabla \omega_{1-t}(u-\sqrt t \, x) \cdot x\, du\\
&= -\frac{1}{2t} \Delta_x \int_{\R^d} \bar h(u)\,\omega_{1-t} (u-\sqrt t \, x) \,du + \frac{1}{2t} \nabla_x\left[\int_{\R^d} \bar h (u) \, \omega_{1-t}(u-\sqrt t \, x)\, du\right] \cdot x\\
&= -\frac{1}{2t} (\Delta - x \cdot \nabla) \E\,\bar h (Z_{x,t}).
\end{align*}
Finally,
$$\bar h(x) = \int_0^1 \partial_t \E\,\bar h (Z_{x,t}) dt =-\int_0^1 \frac{1}{2t} (\Delta - x \cdot \nabla)\E\,\bar h (Z_{x,t}) \, dt = (\Delta - x \cdot \nabla) f_h,$$
the last equality being justified by \eqref{eq:derivhbar} and the bound \eqref{eq:boundhihbar}.
\end{proof}

Before stating our main regularity results, let us give the idea behind the proof. Starting from \eqref{eq:derivfh}, we have that
$$\frac{\partial^2 f_h}{\partial x_i \partial x_j} (x) - \frac{\partial^2 f_h}{\partial x_i \partial x_j} (y) =- \int_0^1 \frac{1}{2 (1-t)} \E[(Z_iZ_j - \delta_{ij}) (\bar h (Z_{x,t})-\bar h (Z_{y,t)})] dt.$$
Using the $\alpha$-H\"older regularity of $h$, the modulus of the integrand in last integral can be bounded by
$$ \frac{1}{2 (1-t)} \E[|Z_iZ_j - \delta_{ij}|\, |\bar h (Z_{x,t})-\bar h (Z_{y,t})|] \leq C_{ij} \frac{t^{\alpha/2}}{1-t} |x-y|^\alpha,$$
$C_{ij}$ being some constant. However, the function in the right
hand-side is not integrable for $t$ close to $1$. Thus, for $\eta>0$,
we split the integral between $0$ and $1-\eta$ on the one hand (where
we can use lour previous bound), and between $1-\eta$ and $1$ on the
other hand. To bound the second integral, we remark that since
$\E[Z_iZ_j-\delta_{ij}]=0$, we have that
$$\E[(Z_iZ_j - \delta_{ij}) \bar h (Z_{x,t})] = \E[(Z_iZ_j - \delta_{ij}) (\bar h (Z_{x,t})-\bar h (\sqrt t x) )],$$
which, in modulus, is less than (again using the regularity of $h$)
$$\E[|Z_iZ_j-\delta_{ij}|\, \|Z\|^\alpha] (1-t)^{\alpha/2}.$$
The power $(1-t)^{\alpha/2}$ that is gained makes the integral
converge. Finally, we optimize in $\eta>0$.

We are concerned, however, in obtaining the best constants possible
(seen as functions of the dimension $d$ and $\alpha$); this tends to
make the proofs more technical that needed if one is only concerned
with showing regularity. For this reason, the detailed exposition of
the proof in full detail is deferred to Section \ref{sec:proof}.

We start with the regularity in terms of the operator norm of the Hessian of $f_h$.

\begin{prop}\label{lem:ellipticregularity}
Let $h\; : \; \R^d \rightarrow \R$ be a $\alpha$-H\"older function for some $\alpha \in (0,1]$. Then the solution $f_h$ \eqref{eq:barboursolution} of the Stein equation \eqref{eq:steinmulti} satisfies:
\begin{equation}\label{firstresult}
\begin{array}{clcc}
\| \nabla^2 f_{|x}-\nabla^2 f_{|y} \|_{op}&\leq \; [h]_\alpha |x-y|^\alpha \left( C_1(\alpha,d)-2\log |x-y| \right), &&\text{if }|x-y|\leq 1\\
&\leq \; C_1(\alpha,d)\;[h]_\alpha &&\text{if }|x-y| > 1, 
\end{array}
\end{equation}
where 
\eq
\label{eq:c1}
C_1(\alpha,d)=2^{\frac{\alpha}{2}+1} \frac{\alpha+2d}{\alpha
  d}\frac{\Gamma(\frac{\alpha+d}{2})}{\Gamma(d/2)}.  \qe In
particular, for all $0<\beta < \alpha $ ,
$ \frac{\partial^2 f_h}{\partial x_i \partial x_j} $ is globally
$\beta$-H\"older: \eq
\label{eq:lemma}
\| \nabla^2 f_{|x}-\nabla^2 f_{|y} \|_{op}\leq \left(C_1(\alpha,d)+\frac{2}{\alpha-\beta}\right) |x-y|^{\beta} [h]_\alpha.
\qe
It also holds the $(1+\log )$ $\alpha$-H\"older regularity
\eq
\label{eq:lemma2}
\| \nabla^2 f_{|x}-\nabla^2 f_{|y} \|_{op} \leq |x-y|^{\alpha}\left(C_1(\alpha,d)+ |\log |x-y| \,| \right) [h]_\alpha.
\qe
\end{prop}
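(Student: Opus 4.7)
The plan is to combine the formulas \eqref{eq:derivfh} for all second partial derivatives into the matrix identity
\[
\nabla^2 f_h(x)-\nabla^2 f_h(y) = -\int_0^1 \frac{1}{2(1-t)}\,\mathbb{E}\bigl[(ZZ^T-I_d)(\bar h(Z_{x,t})-\bar h(Z_{y,t}))\bigr]\,dt,
\]
using that the second-order Hermite polynomials are precisely the entries of $ZZ^T-I_d$. To bound its operator norm I would test against unit vectors $v,w\in\R^d$, reducing the task to estimating the scalar quantity $\mathbb{E}[M(Z)(\bar h(Z_{x,t})-\bar h(Z_{y,t}))]$, where $M(Z):=(v\cdot Z)(w\cdot Z)-v\cdot w$ is mean-zero and satisfies $\mathbb{E}|M(Z)|\le 2$ by Cauchy--Schwarz applied to the unit-variance Gaussians $v\cdot Z$ and $w\cdot Z$.

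Following the heuristic given just above, I would split the integral at a parameter $\eta\in(0,1]$. On $[0,1-\eta]$ the direct H\"older bound $|\bar h(Z_{x,t})-\bar h(Z_{y,t})|\le[h]_\alpha\,t^{\alpha/2}|x-y|^\alpha$, combined with $\mathbb{E}|M(Z)|\le 2$ and $\int_0^{1-\eta}dt/(1-t)=-\log\eta$, contributes at most $[h]_\alpha|x-y|^\alpha(-\log\eta)$. On $[1-\eta,1]$ the singularity $1/(1-t)$ forces me to exploit the centering $\mathbb{E}M(Z)=0$ and rewrite $\mathbb{E}[M(Z)\bar h(Z_{x,t})]=\mathbb{E}[M(Z)(\bar h(Z_{x,t})-\bar h(\sqrt t\,x))]$ (and analogously for $y$); H\"older then gives $|\bar h(Z_{x,t})-\bar h(\sqrt t\,x)|\le[h]_\alpha(1-t)^{\alpha/2}|Z|^\alpha$. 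The gained $(1-t)^{\alpha/2}$ makes $(1-t)^{\alpha/2-1}$ integrable on $[1-\eta,1]$, and a Cauchy--Schwarz estimate together with the rotational-invariance identity $\mathbb{E}[(v\cdot Z)^2|Z|^\alpha]=\tfrac{1}{d}\mathbb{E}|Z|^{\alpha+2}=\tfrac{d+\alpha}{d}\mathbb{E}|Z|^\alpha$ yields $\mathbb{E}[|M(Z)||Z|^\alpha]\le\tfrac{2d+\alpha}{d}\mathbb{E}|Z|^\alpha$; using $\mathbb{E}|Z|^\alpha=2^{\alpha/2}\Gamma(\tfrac{d+\alpha}{2})/\Gamma(d/2)$ this piece is bounded by $C_1(\alpha,d)[h]_\alpha\eta^{\alpha/2}$ with $C_1$ as in \eqref{eq:c1}.

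Summing, I would obtain
\[
\|\nabla^2 f_h(x)-\nabla^2 f_h(y)\|_{op}\leq [h]_\alpha\bigl(|x-y|^\alpha(-\log\eta)+C_1(\alpha,d)\eta^{\alpha/2}\bigr).
\]
For $|x-y|\le 1$ the choice $\eta=|x-y|^2$ recovers \eqref{firstresult}; for $|x-y|>1$ I take $\eta=1$, which kills the first piece and leaves only the uniform bound $C_1(\alpha,d)[h]_\alpha$. To deduce \eqref{eq:lemma}, I would apply the elementary estimate $\sup_{u\in(0,1]}u^{\alpha-\beta}|\log u|=1/(e(\alpha-\beta))\le 1/(\alpha-\beta)$ with $u=|x-y|$ in order to absorb the log factor, and use $|x-y|^\alpha\le|x-y|^\beta$ when $|x-y|\le 1$; the regime $|x-y|>1$ is immediate from $|x-y|^\beta\ge 1$. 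The bound \eqref{eq:lemma2} follows from the same decomposition by rewriting the log factor uniformly in $|x-y|$.

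The main difficulty will be keeping all constants sharp enough to produce precisely \eqref{eq:c1}: the naive submultiplicative estimate $\|ZZ^T-I_d\|_{op}\le|Z|^2+1$ would replace the dimensional factor $\tfrac{2d+\alpha}{d}$ by the cruder $d+\alpha+1$ and spoil the announced behaviour in $d$. Channelling the whole argument through the scalar $M(Z)$ and exploiting rotational invariance at each step is what makes the bookkeeping match \eqref{eq:c1} exactly; this is also why the authors defer the careful accounting to Section~\ref{sec:proof}.
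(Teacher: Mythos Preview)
Your proposal is correct and follows essentially the same approach as the paper's proof: both derive the matrix identity for $\nabla^2 f_h$, split the time integral at $1-\eta$, use the $\alpha$-H\"older bound directly on $[0,1-\eta]$, use the centering $\E[M(Z)]=0$ together with $|\bar h(Z_{x,t})-\bar h(\sqrt t\,x)|\le[h]_\alpha(1-t)^{\alpha/2}|Z|^\alpha$ on $[1-\eta,1]$, and then optimize by taking $\eta=|x-y|^2$ (or $\eta=1$). The only cosmetic difference is that the paper tests the symmetric Hessian with the quadratic form $a^T(\cdot)a$ and bounds $\E[|(a\cdot Z)^2-1|\,|Z|^\alpha]$ via $(a\cdot Z)^2+1$ and the symmetry $\E[\tilde Z_1^2|\tilde Z|^\alpha]=\tfrac{1}{d}\E|\tilde Z|^{\alpha+2}$, whereas you use the bilinear form $v^T(\cdot)w$ and Cauchy--Schwarz with weight $|Z|^\alpha$; both routes produce exactly the same constant $C=\tfrac{2d+\alpha}{d}\E|Z|^\alpha$, hence the same $C_1(\alpha,d)$.
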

Now we turn to the regularity of the Laplacian.

\begin{prop}
\label{prop:regu2}
Let $h\; : \; \R^d \rightarrow \R$ be a $\alpha$-H\"older function for some $\alpha \in (0,1]$. Then the solution $f_h$ \eqref{eq:barboursolution} of the Stein equation \eqref{eq:steinmulti} satisfies:
\begin{align*}
\left| \Delta f_{|x}-\Delta f_{|y} \right|\leq & \; [h]_\alpha |x-y|^\alpha \left( C_2(\alpha,d)-2\,d\,\log |x-y| \right), &&\text{if }|x-y|\leq 1\\
\leq &\; C_2(\alpha,d)\;[h]_\alpha &&\text{if }|x-y| > 1,
\end{align*}
where 
\eq
\label{eq:c2}
\begin{array}{rl}
C_2(\alpha,d)&=2^{\frac{\alpha}{2}+1} \frac{(\alpha+2d)\,\Gamma(\frac{\alpha+d}{2})}{\alpha\;\Gamma(d/2)} \text{ if } \alpha \in(0,1),\\
C_2(1,d) &= 2\sqrt{\frac 2 \pi}\sqrt d.
\end{array}
\qe
In particular, for all $0<\beta<\alpha$,
\eq
\label{eq:holderlaplacian}
|\Delta f_{|x}-\Delta f_{|y} | \leq \left(C_2(\alpha,d)+d\frac{2}{\alpha-\beta}\right) |x-y|^{\beta} [h]_\alpha.
\qe
\end{prop}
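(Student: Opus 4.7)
The plan is to split into two cases: $\alpha\in(0,1)$ reduces to Proposition~\ref{lem:ellipticregularity} via a simple matrix inequality, while $\alpha=1$ requires a separate, sharper argument based on Gaussian integration by parts.

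For $\alpha\in(0,1)$, note that $|\mathrm{tr}\,M|\le d\,\|M\|_{op}$ for any symmetric $d\times d$ matrix $M$ (since $|\sum_i\lambda_i|\le d\max_i|\lambda_i|$). Applied to $M=\nabla^2 f_h(x)-\nabla^2 f_h(y)$, so that $\mathrm{tr}\,M=\Delta f_h(x)-\Delta f_h(y)$, the Hessian bound \eqref{firstresult} immediately yields, for $|x-y|\le 1$,
\[
|\Delta f_h(x)-\Delta f_h(y)|\le d[h]_\alpha|x-y|^\alpha\bigl(C_1(\alpha,d)-2\log|x-y|\bigr);
\]
a direct computation gives $d\cdot C_1(\alpha,d)=C_2(\alpha,d)$ (both equal $2^{\alpha/2+1}(\alpha+2d)\Gamma((\alpha+d)/2)/(\alpha\,\Gamma(d/2))$), and the factor $d$ multiplying the $-2\log|x-y|$ term produces exactly the claimed $-2d\log|x-y|$ coefficient. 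The case $|x-y|>1$ follows by the same multiplication.

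For $\alpha=1$ this route produces a constant of order $d\cdot C_1(1,d)\asymp d\sqrt d$, far worse than the target $C_2(1,d)=2\sqrt{2/\pi}\sqrt d$. To obtain the sharp $\sqrt d$ dependence I would go back to
\[
\Delta f_h(x)=-\int_0^1\frac{1}{2(1-t)}\,\E\bigl[(|Z|^2-d)\,\bar h(Z_{x,t})\bigr]\,dt
\]
(sum \eqref{eq:derivfh} with $|\mathbf{i}|=2$ on the diagonal) and apply the Gaussian integration-by-parts identity $\E[(|Z|^2-d)g(Z)]=\E[Z\cdot\nabla g(Z)]$ (valid for smooth $g$ and extended to Lipschitz $g$ by mollification) to rewrite
\[
\Delta f_h(x)-\Delta f_h(y)=-\int_0^1\frac{1}{2\sqrt{1-t}}\,\E\bigl[Z\cdot(\nabla h(Z_{x,t})-\nabla h(Z_{y,t}))\bigr]\,dt.
\]
From here I would run the same split-at-$\eta=|x-y|^2$ scheme as in the sketch preceding Proposition~\ref{lem:ellipticregularity}: on $[0,1-\eta]$ the original (non-IBP) form together with $|\bar h(Z_{x,t})-\bar h(Z_{y,t})|\le[h]_1\sqrt t|x-y|$ and the crude bound $\E\bigl[\bigl||Z|^2-d\bigr|\bigr]\le\E[|Z|^2]+d=2d$ produces the $-2d\log|x-y|$ contribution; on $[1-\eta,1]$ the IBP form together with $|\nabla h|\le[h]_1$ and careful bookkeeping of $|Z|$-moments yields the dimensional constant $2\sqrt{2/\pi}\sqrt d$.

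Finally, the $\beta$-H\"older estimate \eqref{eq:holderlaplacian} follows from the main bound via the elementary inequality $\sup_{0<r\le 1}r^{\alpha-\beta}|\log r|\le 1/(\alpha-\beta)$ combined with $|x-y|^\beta\ge 1$ for $|x-y|>1$. The main obstacle is the $\alpha=1$ case: imitating the proof of Proposition~\ref{lem:ellipticregularity} verbatim produces naive moment bounds $\E\bigl[\bigl||Z|^2-d\bigr|\bigr]\le 2d$ and $\E\bigl[\bigl||Z|^2-d\bigr||Z|\bigr]\le(2d+1)\E[|Z|]$ which are off by a factor of $d$, and it is precisely the IBP trade---one power of $|Z|^2-d$ for a $\sqrt{1-t}$---that saves this $d$ and restores the sharp $\sqrt d$ behaviour.
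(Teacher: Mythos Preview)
Your proposal is correct and, for $\alpha=1$, coincides with the paper's argument (Gaussian integration by parts to trade $|Z|^2-d$ for $\sqrt{1-t}\,Z\cdot\nabla$, then the usual splitting at $\eta=|x-y|^2$).

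For $\alpha\in(0,1)$ your route differs from the paper's. The paper does not invoke Proposition~\ref{lem:ellipticregularity}; instead it repeats the splitting argument from scratch with $(a\cdot Z)^2-1$ replaced by $|Z|^2-d=\sum_i(Z_i^2-1)$, bounding $\E\bigl[\bigl||Z|^2-d\bigr|\bigr]\le 2d$ on $[0,1-\eta]$ and $\E\bigl[(|Z|^2+d)|Z|^\alpha\bigr]=2^{\alpha/2}(\alpha+2d)\Gamma((\alpha+d)/2)/\Gamma(d/2)$ on $[1-\eta,1]$. Your observation that $|\mathrm{tr}\,M|\le d\|M\|_{op}$ together with the identity $d\,C_1(\alpha,d)=C_2(\alpha,d)$ collapses this repetition into a one-line deduction from \eqref{firstresult} and \eqref{eq:lemma}, yielding exactly the same constants. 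This is a cleaner packaging: it makes transparent that the Laplacian estimate for $\alpha<1$ carries no new information beyond the Hessian estimate, whereas the paper's presentation obscures this by redoing the computation. Conversely, the paper's explicit redo has the minor advantage of being self-contained and of showing directly where the factor $d$ in the logarithmic term originates (from $\E[|Z|^2+d]=2d$ rather than from a matrix inequality).
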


Note that Proposition \ref{lem:ellipticregularity} implies that, for $\alpha=1$, when $|x-y|$ is small, then
$$\left | \frac{\partial^2 f_h}{\partial x_i \partial x_j} (x) - \frac{\partial^2 f_h}{\partial x_i \partial x_j} (y) \right| = \mathcal O \left( |x-y| \log |x-y| \right). $$
The example given by Rai\v{c} \cite{raivc2004multivariate} shows that this rate is optimal; indeed, we have the following result.
\begin{prop}
\label{prop:raic}
Let $h \; : \; \R^2 \rightarrow \R$ be the Lipschitz function defined by $h(x,y) = \max(0,\min(x,y))$. Then
$$\frac{\partial^2 f_h}{\partial x \partial y}(u,u) - \frac{\partial^2 f_h}{\partial x \partial y}(0,0) \underset{u\rightarrow 0^+}{\sim} \frac{1}{\sqrt{2\pi}} \,u \log u .$$
\end{prop}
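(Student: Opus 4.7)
The plan is to exploit the positive $1$-homogeneity of $h$, which reduces the two-dimensional computation to a one-variable asymptotic. Starting from formula \eqref{eq:derivfh} with $\mathbf{i}=(1,1)$, so that $H_{(1,1)}(z)=z_1 z_2$, and using $h(\sqrt{t}u+\sqrt{1-t}\,z_1,\sqrt{t}u+\sqrt{1-t}\,z_2)=\sqrt{1-t}\,h(s+z_1,s+z_2)$ with $s=u\sqrt{t/(1-t)}$, I would first obtain
\[
\frac{\partial^2 f_h}{\partial x\partial y}(u,u)=-\int_0^1 \frac{F\bigl(u\sqrt{t/(1-t)}\bigr)}{2\sqrt{1-t}}\,dt,\qquad F(s):=\E\bigl[Z_1 Z_2\,h(s+Z_1,s+Z_2)\bigr].
\]
The successive changes of variable $r=\sqrt{t/(1-t)}$ and then $v=ur$, which together absorb the singularity at $t=1$, then yield for $u>0$
\[
\frac{\partial^2 f_h}{\partial x\partial y}(u,u)-\frac{\partial^2 f_h}{\partial x\partial y}(0,0)=-u\int_0^\infty \frac{v\bigl(F(v)-F(0)\bigr)}{(u^2+v^2)^{3/2}}\,dv.
\]

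The second step is to compute $F$ in closed form. The layer-cake representation $h(x,y)=\int_0^\infty \mathbf{1}_{x>t}\mathbf{1}_{y>t}\,dt$, combined with the independence of $Z_1,Z_2$ and the identity $\E[Z\mathbf{1}_{Z>a}]=\phi(a)$ (with $\phi$ the standard $1$-d Gaussian density), gives
\[
F(s)=\int_0^\infty \phi(t-s)^2\,dt=\int_{-s}^\infty \phi(z)^2\,dz,
\]
so that $F$ is smooth and bounded on $\R$, with $F'(v)=\phi(v)^2$. This route is deliberately chosen to bypass the distributional subtlety that $\partial_2\partial_1 h$ is a Dirac measure supported on the positive diagonal.

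The final step is to extract the $u\log u$ behaviour. An integration by parts in $v$ based on $v/(u^2+v^2)^{3/2}=-\partial_v(u^2+v^2)^{-1/2}$---the boundary terms vanish because $F(v)-F(0)=O(v)$ near $0$ and $F$ is bounded at infinity---recasts the quantity of interest as
\[
-u\int_0^\infty \frac{\phi(v)^2}{\sqrt{u^2+v^2}}\,dv.
\]
Splitting this integral at $v=1$, using $\int_0^1 dv/\sqrt{u^2+v^2}=\mathrm{arcsinh}(1/u)=-\log u+O(1)$, together with the elementary bound $|\phi(v)^2-\phi(0)^2|\leq C v^2$ on $[0,1]$ to handle the regular remainder, shows that the inner integral equals $-\phi(0)^2\log u+O(1)$ as $u\to 0^+$, whence the claimed equivalent follows. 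I do not expect a genuine obstacle; the only care required is to justify the changes of variable and the interchange of $(t,z)$-integrations near $t=1$, which is enabled by the Lipschitz/H\"older-type bounds on the integrand already used in the proof of Proposition~\ref{prop:regul-solut-steins}.
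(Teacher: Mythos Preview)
Your argument is correct and takes a genuinely different route from the paper's. The paper expands $h$ directly via its piecewise definition, integrates out $Z_2$ by computing $\E[Z_2\mathbf 1_{Z_2\geq Z_1}\mid Z_1]=\phi(Z_1)$, and then splits the remaining one-dimensional $t$-integral into a ``$\sqrt{t}\,u$'' part (which yields the logarithm after an explicit substitution) and a ``$\sqrt{1-t}\,Z_1$'' part (shown to be $O(u)$ by Fubini). Your approach instead exploits the positive $1$-homogeneity of $h$ to factor out $\sqrt{1-t}$ and reduce everything to a single scalar function $F(s)=\E[Z_1Z_2\,h(s+Z_1,s+Z_2)]$, then computes $F$ in closed form via the layer-cake identity and extracts the singularity through a change of variables and an integration by parts. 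Your path is cleaner: homogeneity is the structural reason why this example is tractable, the layer-cake step avoids any case splitting, and the integration by parts isolates $F'(v)=\phi(v)^2$ so that the $\log$ emerges from the elementary integral $\int_0^1 (u^2+v^2)^{-1/2}\,dv$. The paper's route is more hands-on but requires no extra idea beyond direct computation.

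One point worth flagging: your method yields the constant $\phi(0)^2=\tfrac{1}{2\pi}$, not $\tfrac{1}{\sqrt{2\pi}}$. This is in fact the correct constant: in the paper's computation the step passing from $\E[Z_1 e^{-Z_1^2/2}\mathbf 1_{Z_1\geq -a}]$ to $\int_{-a}^\infty z e^{-z^2}\,dz$ silently drops a factor $1/\sqrt{2\pi}$ coming from the Gaussian density. Since the purpose of the proposition is to exhibit the exact order $u\log u$ (thereby showing optimality of \eqref{eq:logregularity}), the discrepancy is cosmetic, but you should state the constant you actually obtain rather than claim the one in the statement.
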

\noindent The proof can be found in the Appendix.

\section{Multivariate Berry-Esseen bounds in Wasserstein distance}

As anticipated, we apply the regularity results obtained in previous
section to obtain Berry-Esseen bounds in the CLT, in $1$-Wasserstein
distance.

Let $X_1,X_2,...$ be an i.i.d.\ sequence of centered, square-integrable and isotropic random vectors; that is, $\E[X_1] = 0$ and $\E[ X_1 X_1^T ] = I_d$. Let $W = n^{-1/2} \sum_{i=1}^n X_i$. We are interested in $\mathcal W_\alpha(W,Z)$ for $\alpha\in(0,1]$, where the $\alpha$-Wasserstein distance is defined as
$$\mathcal W_{\alpha}(X,Y) = \underset{\left\{ h \in C(\R^n,\R) \middle| [h]_{\alpha} \leq 1\right\}}{\sup} \E h(X) - \E h(Y).$$
As in the introduction, for $\alpha=1$, the resulting distance is
$\mathcal W := \mathcal W_1$, the classical $1$-Wasserstein distance
(that is,
$\mathcal W(X,Y) = {\sup}_{\mathcal{H}} \E h(X) - \E
h(Y)$ with $\mathcal{H}$ the collection of 1-Lipschitz functions).

We are now in a position to prove our main Theorem. We first give a more general version of it in $\alpha$-Wasserstein distances; Theorem \ref{thm:main} is just the following Theorem applied to $\alpha=1$. 
\begin{thm}
\label{thm:main2}
Let $\alpha \in (0,1]$ and $(X_i)_{i\geq 1}$ be an i.i.d.\ sequence of $d$-dimensional random vectors with unit covariance matrix. Assume that there exists $\delta \in (0,\alpha)$ such that $\E [|X_i|^{2+\delta}] < \infty$. Then
$$\mathcal W_{\alpha}\left(n^{-1/2} \sum_{i=1}^n X_i, Z\right) \leq \frac{ 1}{n^{\frac{\delta}{2}}} \left[ \left(C_1(\alpha,d)+\frac{2}{\alpha-\delta}\right)\E|X_i|^{2+\delta} +\left(C_2(\alpha,d)+d \frac{2}{\alpha-\delta}\right) \E|X_i|^{\delta} \right],$$
where $C_1(\alpha,d)$ and $C_2(\alpha,d)$ are respectively defined in \eqref{eq:c1} and \eqref{eq:c2}.
\end{thm}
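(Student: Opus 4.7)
The plan is to combine Barbour's Stein identity \eqref{eq:6} with a standard leave-one-out decomposition, and then invoke the H\"older regularity bounds \eqref{eq:lemma} and \eqref{eq:holderlaplacian} with exponent $\beta=\delta$ (which is admissible because $\delta<\alpha$). Fix a test function $h$ with $[h]_\alpha\le 1$. By Proposition \ref{prop:regul-solut-steins} and \eqref{eq:6}, it suffices to bound
$$\E[h(W)-h(Z)] = \E[\Delta f_h(W) - W\cdot\nabla f_h(W)].$$

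Set $W^{(i)} = W - n^{-1/2} X_i$, which is independent of $X_i$, and $\widehat W_i(s) = W^{(i)} + s\,n^{-1/2} X_i$ for $s\in[0,1]$. From $\E X_i = 0$ and $\E[X_i X_i^T] = I_d$, independence yields the two pivotal identities
$$\E [X_i \cdot \nabla f_h(W^{(i)})] = 0, \qquad \E [X_i^T \nabla^2 f_h(W^{(i)}) X_i] = \E[\Delta f_h(W^{(i)})].$$
A first-order Taylor expansion of $\nabla f_h$ along the segment from $W^{(i)}$ to $W = W^{(i)}+ n^{-1/2}X_i$ gives
$$\nabla f_h(W) - \nabla f_h(W^{(i)}) = n^{-1/2} \int_0^1 \nabla^2 f_h(\widehat W_i(s))\, X_i \, ds.$$
Plugging this into $W\cdot\nabla f_h(W) = n^{-1/2}\sum_i X_i\cdot\nabla f_h(W)$ (the first pivotal identity erases the contribution of $\nabla f_h(W^{(i)})$), writing $\E[\Delta f_h(W)] = \tfrac{1}{n}\sum_i \E[\Delta f_h(W)]$, and inserting the second pivotal identity, I obtain the clean decomposition
$$\E[h(W)-h(Z)] = \frac{1}{n}\sum_{i=1}^n \E[\Delta f_h(W)-\Delta f_h(W^{(i)})] + \frac{1}{n}\sum_{i=1}^n \int_0^1 \E\bigl[X_i^T (\nabla^2 f_h(W^{(i)}) - \nabla^2 f_h(\widehat W_i(s)))X_i\bigr]\,ds.$$

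Next I estimate the two sums via the H\"older bounds. For the first, \eqref{eq:holderlaplacian} at $\beta=\delta$ controls $|\Delta f_h(W)-\Delta f_h(W^{(i)})|$ by $(C_2(\alpha,d)+2d/(\alpha-\delta))\,[h]_\alpha\,|W-W^{(i)}|^\delta = (C_2(\alpha,d)+2d/(\alpha-\delta))\,[h]_\alpha\,n^{-\delta/2}|X_i|^\delta$. For the second, combine the elementary inequality $|x^T A x|\le |x|^2\|A\|_{op}$ with \eqref{eq:lemma} at $\beta=\delta$ to get
$$\bigl|\E\bigl[X_i^T(\nabla^2 f_h(W^{(i)})-\nabla^2 f_h(\widehat W_i(s)))X_i\bigr]\bigr| \le \bigl(C_1(\alpha,d)+\tfrac{2}{\alpha-\delta}\bigr)[h]_\alpha\, s^\delta\, n^{-\delta/2}\,\E|X_i|^{2+\delta},$$
and $\int_0^1 s^\delta\,ds = (1+\delta)^{-1}\le 1$. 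Summing over $i$ and dividing by $n$ (which cancels the sum), then taking the supremum over $h$ with $[h]_\alpha\le 1$, delivers exactly the estimate in the statement.

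The only real point requiring care is the dimensional bookkeeping of the two moment factors: $\E|X_i|^\delta$ arises from the Laplacian-regularity piece (no quadratic form in $X_i$ is involved there), whereas $\E|X_i|^{2+\delta}$ arises from the Hessian-regularity piece (the $|X_i|^2$ coming from the quadratic form $X_i^T A X_i$ and the $|X_i|^\delta$ from the H\"older modulus applied to $|\widehat W_i(s)-W^{(i)}|^\delta$). Beyond this accounting the argument is routine, since the sharp H\"older control of the second derivatives of $f_h$ has already been secured by Propositions \ref{lem:ellipticregularity} and \ref{prop:regu2}.
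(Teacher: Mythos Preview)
Your proof is correct and follows essentially the same route as the paper: Stein identity, leave-one-out decomposition $W^{(i)}=W-n^{-1/2}X_i$, the two pivotal identities from independence and isotropy, Taylor expansion of $\nabla f_h$, and then the H\"older bounds \eqref{eq:lemma} and \eqref{eq:holderlaplacian} with $\beta=\delta$. The only cosmetic difference is that the paper writes the Taylor remainder via a uniform random variable $\theta$ on $[0,1]$ (bounding $\E[\theta^\delta]\le 1$) whereas you write it as $\int_0^1(\cdots)\,ds$ (bounding $\int_0^1 s^\delta\,ds\le 1$); these are the same device.
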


\begin{remark}
From Stirling's formula, $C_1(\alpha,d)= \mathcal O ( \sqrt d)$, $C_2(\alpha,d) = \mathcal O (d^{1+\alpha/2})$ for $\alpha \in (0,1)$ and $C_2(1,d) = \mathcal O (\sqrt d)$.\end{remark}

\noindent {\bf Proof of Theorems \ref{thm:main} and \ref{thm:main2} }. Let $h$ be $\alpha$-H\"older (with $ [h]_{\alpha} \leq 1$) and $f_h$ be the solution of the Stein equation defined by Proposition \ref{prop:regul-solut-steins}. Then,
\begin{align*}
\E[h(W) - h(Z)] &= \E[ \Delta f_h(W) - W \cdot \nabla f_h(W) ]\\
&=\frac{1}{n}\sum_{i=1}^n\left[ \E[ \Delta f_h(W) -\sqrt n X_i \cdot \nabla f_h(W) \right].
\end{align*}
The following calculations already appeared in the literature (see e.g. \cite{raivc2004multivariate}), we include them here for completeness. Let $W_i = W-X_i/\sqrt n = \frac{1}{\sqrt n} \sum_{j\neq i} X_j$. By Taylor's formula, we have for some uniformly distributed in $[0,1]$ (and independent of everything else) $\theta$
$$\E[X_i\cdot \nabla f_h(W)] =\frac{1}{\sqrt n}\E \left[ X_i^T \nabla^2 f_h\left(W_i+\theta \frac{X_i}{\sqrt n}\right) X_i \right],$$
leading to
$$\E[h(W) - h(Z)] = \frac{1}{n}\sum_{i=1}^n\E\left[ \Delta f_h(W) - X_i^T \nabla^2 f_h\left(W_i+\theta \frac{X_i}{\sqrt n} \right)X_i \right].$$

Let $X_{i,j}$ be the $j$th coordinate of $X_i$. Since $W_i$ is independent of $X_i$, and $X_i$ has unit covariance matrix, we have
\begin{align*}
\E\left[X_i^T \nabla^2 f_h\left(W_i\right)X_i \right] = \sum_{j,k= 1}^d \E\left[ X_{i,j}X_{i,k} \frac{\partial^2 f_h}{\partial x_j\partial x_k}(W_i) \right]
= \sum_{j=1}^d \E\left[ \frac{\partial^2 f_h}{\partial x_j^2}(W_i) \right]
= \E[\Delta f_h(W_i)].
\end{align*}
Finally,
\[
\E[h(W) - h(Z)] = \frac{1}{n}\sum_{i=1}^n\E\left[ \Delta f_h(W) - \Delta f_h(W_i) - X_i^T \left(\nabla^2 f_h\left(W_i + \theta \frac{X_i}{\sqrt n} \right)-\nabla^2 f_h(W_i)\right)X_i \right].
\]
Note then that
$$\bigg| X_i^T \left(\nabla^2 f_h\left(W_i + \theta \frac{X_i}{\sqrt n} \right)-\nabla^2 f_h(W_i)\right)X_i\bigg| \leq |X_i|^2 \left\| \nabla^2 f_h\left(W_i + \theta \frac{X_i}{\sqrt n} \right)-\nabla^2 f_h(W_i)\right\|_{op}.$$
We now use Propositions \ref{lem:ellipticregularity} and
\ref{prop:regu2} with $\beta=\delta \leq \alpha$ to obtain:
$$
\E[h(W) - h(Z)] \leq \frac{1}{n} \sum_{i=1}^n \left[ \left(C_2(\alpha,d)+d\frac{2}{\alpha-\delta} \right)\frac{\E[|X_i|^{\delta}\theta^{\delta}]}{n^{\delta/2}} + \left(C_1(\alpha,d)+ \frac{2}{\alpha-\delta} \right)\frac{\E[|X_i|^{2+\delta}\theta^{\delta}]}{n^{\delta/2}}\right].
$$
Noting that $\E[\theta^{\beta}] \leq 1$ and rearranging, we obtain the result.
\qed

\begin{cor}
\label{cor:1}
Let $\alpha \in (0,1]$, and $(X_i)_{i\geq 1}$ be an i.i.d.\ sequence of $d$-dimensional random vectors with unit covariance matrix. Assume that $\E [|X_i|^{2+\alpha}] < \infty$. Then for $n>\exp(2/\alpha)$ ,
$$\mathcal W_{\alpha}\left(n^{-1/2} \sum_{i=1}^n X_i, Z\right) \leq e \, \frac{ C_1(\alpha,d)+C_2(\alpha,d)+2(1+d)\,\log n}{n^{\frac \alpha 2}} \; \E|X_i|^{2+\alpha},$$
where $C_1(\alpha,d)$ and $C_2(\alpha,d)$ are respectively defined in \eqref{eq:c1} and \eqref{eq:c2}.
\end{cor}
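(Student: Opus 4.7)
The plan is to apply Theorem~\ref{thm:main2} with $\delta$ chosen close to $\alpha$, and then reduce the two moments appearing in the bound to $\E|X_i|^{2+\alpha}$ via Lyapunov's inequality. The tension in the theorem's bound is between the prefactor $1/n^{\delta/2}$, which favors small $\delta$, and the term $2/(\alpha-\delta)$, which blows up as $\delta\uparrow\alpha$. Balancing these suggests $\alpha-\delta$ of order $1/\log n$, so I will take $\delta = \alpha - 2/\log n$. The hypothesis $n > \exp(2/\alpha)$ is precisely equivalent to $\delta > 0$, while $\delta < \alpha$ is immediate.

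With this choice, $n^{-\delta/2} = n^{-\alpha/2}\cdot n^{1/\log n} = e\cdot n^{-\alpha/2}$ and $2/(\alpha-\delta) = \log n$, so Theorem~\ref{thm:main2} gives
\[
\mathcal W_{\alpha}(W,Z) \;\le\; \frac{e}{n^{\alpha/2}} \Bigl[(C_1(\alpha,d) + \log n)\,\E|X_i|^{2+\delta} + (C_2(\alpha,d) + d\log n)\,\E|X_i|^{\delta}\Bigr].
\]
Since $X_i$ has identity covariance matrix, $\E|X_i|^2 = d \ge 1$, and Lyapunov's inequality yields $\E|X_i|^{2+\alpha} \ge d^{(2+\alpha)/2} \ge 1$. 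Applying Lyapunov a second time, for any exponent $q\in[0, 2+\alpha]$ one has $\E|X_i|^q \le (\E|X_i|^{2+\alpha})^{q/(2+\alpha)} \le \E|X_i|^{2+\alpha}$, and in particular both $\E|X_i|^\delta$ and $\E|X_i|^{2+\delta}$ are bounded by $\E|X_i|^{2+\alpha}$.

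Substituting these bounds in, the bracket becomes at most $\bigl(C_1(\alpha,d)+C_2(\alpha,d) + (1+d)\log n\bigr)\E|X_i|^{2+\alpha}$, which is in turn bounded by $\bigl(C_1(\alpha,d)+C_2(\alpha,d) + 2(1+d)\log n\bigr)\E|X_i|^{2+\alpha}$, yielding the claim. The argument is essentially bookkeeping once Theorem~\ref{thm:main2} is available; the only substantive point is the calibration $\delta = \alpha - 2/\log n$, which explains both the factor $e$ and the threshold $n > \exp(2/\alpha)$ in the statement.
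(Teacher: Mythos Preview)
Your proof is correct and follows essentially the same approach as the paper: choose $\delta=\alpha-2/\log n$ in Theorem~\ref{thm:main2}, use the moment comparison $\E|X_i|^q\le\E|X_i|^{2+\alpha}$ for $q\le 2+\alpha$ (the paper phrases this via H\"older/Jensen rather than Lyapunov, but it is the same inequality), and simplify using $n^{1/\log n}=e$. Your additional remark that $n>\exp(2/\alpha)$ is exactly the condition $\delta>0$ is a helpful clarification.
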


\begin{proof}
  By H\"older's and the Cauchy-Schwarz inequalities, for any
  $\delta\leq \alpha$,
  $\E|X_i|^{\delta} \leq \left( \E|X_i|^{2+\alpha}
  \right)^{\delta/(2+\alpha)}$. But by Jensen's inequality,
  $\E|X_i|^{2+\alpha} \geq (\E |X_i|^2)^{(2+\alpha)/2} =
  d^{(2+\alpha)/2} \geq 1$, so that, since $\delta/(2+\alpha)<1$,
  $\left( \E|X_i|^{2+\alpha} \right)^{\delta/(2+\alpha)} \leq
  \E|X_i|^{2+\alpha}$.  Similarly,
  $\E|X_i|^{2+\delta} \leq \left( \E|X_i|^{2+\alpha}
  \right)^{1-\frac{\alpha-\delta}{2+\alpha}} \leq \E|X_i|^{2+\alpha}$.
  Note now that the bound of Theorem \ref{thm:main2} holds for any
  $0<\delta<\alpha$. Choosing $\alpha-\delta = 2/\log n$ achieves the
  proof since $n^{-\frac{1}{\log n}} = 1/ e$.
\end{proof}

When applied to $\alpha=1$, previous corollary leads to Corollary \ref{cor:main}, which we recall here: as long as $\E|X_i|^3 < \infty$,
$$\mathcal W\left(n^{-1/2} \sum_{i=1}^n X_i, Z\right) \leq e \, \frac{ C(d)+2(1+d)\,\log n}{\sqrt n} \; \E|X_i|^3,$$
where $C(d) = 2^{3/2} \frac{(2d+1)\,\Gamma(\frac{d+1}{2})}{d\Gamma(\frac d 2)}+2\sqrt{\frac 2 \pi} \sqrt d $. \cite{zhai2016multivariate} also obtains a near-optimal rate of convergence in $\mathcal O(\log n/\sqrt n)$, but under the much stronger assumption that $|X_i| \leq \beta$ almost surely; nevertheless, the distance used in \cite{zhai2016multivariate} (the quadratic Wasserstein distance) is stronger than ours, the behaviour of the constant on the higher order term is $\mathcal O(\sqrt d)$, here we obtain $\mathcal O(d)$.

\section{Extension to higher order derivatives}

The regularity result easily extends to higher order derivatives.

\begin{prop}
Let $h \; : \; \R^d \rightarrow \R$ be a smooth, compactly supported function, and denote by  $[h]_{\alpha,p}$   a common $\alpha$-H\"older constant for all derivatives of order $p$ of $h$. Then the solution $f_h$ \eqref{eq:barboursolution} of the Stein equation \eqref{eq:steinmulti} satisfies, for all $(i_1,\ldots,i_{p+2}) \in \{1,\ldots,d\}^{p+2}$:
\begin{align*}
\bigg| \frac{\partial^{p+2}f}{\prod_{j=1}^{p+2} \partial x_{i_j}}(x) - \frac{\partial^{p+2}f}{\prod_{j=1}^{p+2} \partial x_{i_j}}(y) \bigg|\leq & \; [h]_{\alpha,p} |x-y|^\alpha \left( A-2\log |x-y| \right), &&\text{if }|x-y|\leq 1\\
\leq &\; A\;[h]_{\alpha,p} &&\text{if }|x-y| > 1,
\end{align*}
where
$$A = 2^{\alpha/2+1}\frac{\alpha+d+1}{\alpha}\frac{\Gamma(\frac{\alpha+d}{2})}{\Gamma(d/2)}.$$
In particular, all derivatives of the order $p+2$ of $f_h$ are
$\beta$-H\"older for any $0<\beta<\alpha$ and we have
$$\bigg| \frac{\partial^{p+2}f}{\prod_{j=1}^{p+2} \partial x_{i_j}}(x) - \frac{\partial^{p+2}f}{\prod_{j=1}^{p+2} \partial x_{i_j}}(y) \bigg| \leq [h]_{\alpha,p} \left( A+\frac{2}{\alpha-\beta} \right)|x-y|^{\beta}.$$
\end{prop}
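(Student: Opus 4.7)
The proof mirrors that of Proposition \ref{lem:ellipticregularity} after establishing an integral representation that isolates exactly $p$ derivatives onto $h$. Given the derivative sequence $(i_1, \ldots, i_{p+2})$, choose any two positions to form a multi-index $\mathbf{k}'$ of order $2$ and let $\mathbf{k}''$ be the multi-index of order $p$ carrying the remaining indices. Set $\tilde h = \partial^{\mathbf{k}''} h$, which by assumption is $\alpha$-H\"older with constant $[h]_{\alpha,p}$. Since $h$ is smooth with compact support, differentiating the integrand $\E \bar h(Z_{x,t})$ of \eqref{eq:solgen} $p$ times in $x$ produces a factor $t^{p/2}$ and replaces $h$ by $\tilde h$; applying \eqref{eq:derivhbar} to the two remaining derivatives yields the Hermite weight $H_{\mathbf{k}'}(Z)$ together with a factor $t/(1-t)$. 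Combining the steps gives
\begin{equation*}
\frac{\partial^{p+2} f_h}{\prod_{j=1}^{p+2} \partial x_{i_j}}(x) = -\int_0^1 \frac{t^{p/2}}{2(1-t)} \E\bigl[H_{\mathbf{k}'}(Z)\, \overline{\tilde h}(Z_{x,t})\bigr]\,dt,
\end{equation*}
and all exchanges of derivative and integral are justified by dominated convergence.

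\textbf{Estimating the increment.} Fix $x,y$ with $|x-y|\leq 1$. Splitting the integral of the difference at $t = 1-\eta$ with $\eta = |x-y|^2$, one proceeds exactly as in the sketch preceding Proposition \ref{lem:ellipticregularity}. On $[0,1-\eta]$ the $\alpha$-H\"older regularity of $\tilde h$ gives $|\tilde h(Z_{x,t}) - \tilde h(Z_{y,t})|\leq [h]_{\alpha,p} t^{\alpha/2}|x-y|^\alpha$, so this part contributes at most
\begin{equation*}
\frac{[h]_{\alpha,p}}{2}|x-y|^\alpha \,\E|H_{\mathbf{k}'}(Z)| \int_0^{1-\eta}\frac{dt}{1-t} = \tfrac12 \E|H_{\mathbf{k}'}(Z)|\,[h]_{\alpha,p}|x-y|^\alpha\,(-2\log|x-y|).
\end{equation*}
On $[1-\eta,1]$, the zero-mean property $\E H_{\mathbf{k}'}(Z)=0$ lets one replace $\overline{\tilde h}(Z_{x,t})$ by $\tilde h(Z_{x,t}) - \tilde h(\sqrt t x)$ (and analogously for $y$), producing the crucial factor $(1-t)^{\alpha/2}$ and contributing at most $(2/\alpha)\,\E[|H_{\mathbf{k}'}(Z)||Z|^\alpha]\,[h]_{\alpha,p}|x-y|^\alpha$.

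\textbf{The constants.} The decisive observation is that, because $|\mathbf{k}'|=2$, the Hermite polynomial $H_{\mathbf{k}'}$ is either $Z_r^2-1$ or $Z_rZ_s$ ($r\neq s$), both of which obey the pointwise bound $|H_{\mathbf{k}'}(z)|\leq |z|^2+1$ and satisfy $\E[H_{\mathbf{k}'}(Z)^2]\leq 2$; in particular $\E|H_{\mathbf{k}'}(Z)|\leq \sqrt 2\leq 2$. Using $\E|Z|^\alpha = 2^{\alpha/2}\Gamma(\tfrac{\alpha+d}{2})/\Gamma(\tfrac d2)$ and $\E|Z|^{\alpha+2}=(d+\alpha)\,\E|Z|^\alpha$,
\begin{equation*}
\E\bigl[|H_{\mathbf{k}'}(Z)||Z|^\alpha\bigr] \leq \E\bigl[(|Z|^2+1)|Z|^\alpha\bigr] = 2^{\alpha/2}(d+\alpha+1)\,\frac{\Gamma(\tfrac{\alpha+d}{2})}{\Gamma(\tfrac d2)},
\end{equation*}
and multiplying by $2/\alpha$ yields exactly $A$. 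Combining the two parts proves the bound $[h]_{\alpha,p}|x-y|^\alpha(A - 2\log|x-y|)$ for $|x-y|\leq 1$. For $|x-y|>1$, applying the zero-mean trick on the whole interval $[0,1]$ gives the uniform bound $|\partial^{p+2} f_h(x)|\leq (A/2)[h]_{\alpha,p}$, and the triangle inequality closes this case. The $\beta$-H\"older consequence follows by the elementary estimate $(-\log s)\,s^{\alpha-\beta}\leq 1/(e(\alpha-\beta))\leq 1/(\alpha-\beta)$ for $s\in(0,1]$.

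\textbf{Main obstacle.} The only nontrivial analytic step is the integral representation above: iterating the differentiation $p$ times and then coupling with identity \eqref{eq:derivhbar}, while tracking the appropriate factors of $\sqrt t$ and $\sqrt{1-t}$. Once this is done, the whole argument is a direct transcription of the proof of Proposition \ref{lem:ellipticregularity}, with the single bookkeeping novelty being the uniform pointwise bound $|H_{\mathbf{k}'}(z)|\leq |z|^2+1$ --- valid precisely because $|\mathbf{k}'|=2$ --- which is the source of the $(d+\alpha+1)$ factor in $A$ (as opposed to the matrix-norm factor $(\alpha+2d)/d$ appearing in $C_1(\alpha,d)$).
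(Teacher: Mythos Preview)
Your proof is correct and follows essentially the same route as the paper's: derive the integral representation by differentiating $p$ times directly and then converting the last two derivatives into the Hermite weight $H_{\mathbf{k}'}(Z)=Z_{i_{p+1}}Z_{i_{p+2}}-\delta_{i_{p+1}i_{p+2}}$ (the paper does this as two Gaussian integrations by parts, which is the same operation), then split at $1-\eta$ with $\eta=|x-y|^2$ and use the bound $|H_{\mathbf{k}'}(z)|\le |z|^2+1$ to produce the constant $A$. The only cosmetic differences are that you handle $|x-y|>1$ via the uniform bound $|\partial^{p+2}f_h|\le (A/2)[h]_{\alpha,p}$ plus the triangle inequality (the paper takes $\eta=1$ directly), and you use the slightly sharper inequality $-\log s\le \tfrac{1}{e(\alpha-\beta)}s^{-(\alpha-\beta)}$ in the $\beta$-H\"older step.
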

\begin{proof}
  Taking derivatives in \eqref{eq:barboursolution}, we have
$$\frac{\partial^{p+2}f}{\prod_{j=1}^{p+2} \partial x_{i_j}}(x) = \int_0^1 \frac{t^{p}}{2} \E\left[\frac{\partial^{p+2}\bar h}{\prod_{j=1}^{p+2} \partial x_{i_j}} (Z_{x,t})\right] dt.$$
Next perform two Gaussian integration by parts against two indices
$i_{p+1}$ and $i_{p+2}$, say, to get
$$\frac{\partial^{p+2}f}{\prod_{j=1}^{p+2} \partial x_{i_j}}(x) = \int_0^1 \frac{t^{p}}{2(1-t)} \E\left[(Z_{i_{p+1}}Z_{i_{p+2}}- \delta_{i_{p+1}i_{p+2}})\frac{\partial^{p}\bar h}{\prod_{j=1}^{p} \partial x_{i_j}} (Z_{x,t})\right] dt.$$
Then, using the same method as in the proof of Proposition \ref{lem:ellipticregularity} (we do not give all the details here), we have
\begin{align*}
&\bigg| \frac{\partial^{p+2}f}{\prod_{j=1}^{p+2} \partial x_{i_j}}(x) - \frac{\partial^{p+2}f}{\prod_{j=1}^{p+2} \partial x_{i_j}}(y) \bigg| 
\leq - [h]_{\alpha,p} \log \eta + [h]_{\alpha,p} 2^{\alpha/2+1}\frac{\alpha+d+1}{\alpha}\frac{\Gamma(\frac{\alpha+d}{2})}{\Gamma(d/2)} \eta^{\alpha/2}.
\end{align*}
Choose $\eta = |x-y|$ if $|x-y|\leq 1$, $1$ otherwise, to get the first result, and the fact that $-\log u \leq \frac{1}{\alpha-\beta} u^{\beta-\alpha} $ if $u\leq1$ for the second one.
\end{proof}

We stress that one possible application of this Proposition would be a multivariate Berry-Esseen bound in the CLT with matching moments (i.e. assuming that the underlying random variables $X_i$ share the same first $k$ moments with the Gaussian). In this case, faster rates of convergence are expected, see \cite{gaunt2016rates}.

\section{The remaining proofs}\label{sec:proof}

{\bf Proof of Proposition \ref{lem:ellipticregularity}}. Recall that
$$ \frac{\partial^2 f_h}{\partial x_i \partial x_j} = -\int_0^1 \frac{1}{2 (1-t)} \E[(Z_iZ_j - \delta_{ij}) \bar h (Z_{x,t})] dt. $$
Since $\E[Z_iZ_j-\delta_{ij} ]= 0$, we have $\E[(Z_iZ_j-\delta_{ij})\bar h(\sqrt t x)] = 0$, so that
\begin{align}\label{rentrerlafin}
\E[(Z_iZ_j - \delta_{ij}) \bar h (Z_{x,t})] &= \E[(Z_iZ_j - \delta_{ij}) (\bar h (Z_{x,t}) - \bar h (\sqrt t x))].
\end{align}
Thus,
$$\nabla^2 f_h(x) = -\int_0^1 \frac{1}{2(1-t)} \E[ (ZZ^T - I_d) (\bar h (Z_{x,t}) - \bar h (\sqrt t x))]\,dt,$$
where $Z^T$ denotes the transpose of $Z$. Let $a = (a_1,\ldots,a_d)^T \in \R^d$ and assume that $|a| = 1$. We have

\begin{align*}
a^T \nabla^2 f_h(x) a &= - \int_0^1 \frac{1}{2(1-t)} \E[ a^T(ZZ^T - I_d)\,a \,(\bar h (Z_{x,t}) - \bar h (\sqrt t x))]dt\\
&=-\int_0^1 \frac{1}{2(1-t)} \E[ ((Z\cdot a)^2 - 1) \,(\bar h (Z_{x,t}) - \bar h (\sqrt t x))]dt.
\end{align*}
Since $|\bar h (Z_{x,t}) - \bar h (\sqrt t x) | \leq [h]_\alpha (1-t)^{\alpha/2} \|Z\|^\alpha$, we also have
\begin{align}\label{estimationimp}
|\E[ ((Z\cdot a)^2 - 1) \,(\bar h (Z_{x,t}) - \bar h (\sqrt t x))]\,|
\leq [h]_\alpha \E[ | (a\cdot Z)^2 -1| \, \|Z\|^\alpha ] (1-t)^{\alpha/2}.
\end{align}
Let us bound $E[ | (a\cdot Z)^2 -1| \, \|Z\|^\alpha ]$. Let $(a,e_2,\ldots,e_d)$ be an orthonormal basis and $\tilde Z = (a \cdot Z, e_2 \cdot Z,\ldots, e_d \cdot Z)^T$. Then $\tilde Z \sim \mathcal N(0,I_d)$. Moreover, $E[ | (a\cdot Z)^2 -1| \, \|Z\|^\alpha ] = \E [| \tilde Z_1^2 - 1 | \| \tilde Z \|^\alpha]$. Thus,
\begin{align*}
E[ | (a\cdot Z)^2 -1| \, \|Z\|^\alpha ] &= \E [| \tilde Z_1^2 - 1 | \| \tilde Z \|^\alpha] \\
&\leq \E [( \tilde Z_1^2 +1 ) \| \tilde Z \|^\alpha]\\
&= \frac{1}{d} \sum_{i=1}^d \E [(\tilde Z_i^2 +1) \| \tilde Z \|^\alpha ]\\
&= \frac{1}{d} \E [(\| \tilde Z\|^2 +d) \| \tilde Z \|^\alpha ].
\end{align*}

For all $\beta>0$, $\E \|Z\|^\beta = \frac{2^{\frac{\beta}{2}}\Gamma(\frac{\beta+d}{2})}{\Gamma(d/2)}$. We define
\begin{equation}\label{constantetest}
C = \frac{1}{d}\E[ (\|\tilde Z\|^2 +d) \, \|\tilde Z\|^\alpha ] = 2^{\frac{\alpha}{2}} \frac{\alpha+2d}{d}\frac{\Gamma(\frac{\alpha+d}{2})}{\Gamma(d/2)}.
\end{equation}
This shows in particular that $\| \nabla^2 f_h(x)\|_{op} $ is bounded. 

Now we consider $\left| a^T \left(\nabla^2 f_h(x)-\nabla^2 f_h(y)\right)a \right|$ and split the integral into two parts. Let $\eta \in [0,1]$. We have
\begin{align}
&| a^T \left(\nabla^2 f_h(x)-\nabla^2 f_h(y)\right)a | \nonumber\\
& = \bigg| \int_0^1 \frac{1}{2 (1-t)} \E\left[ a^T(ZZ^T - I_d)\,a(\bar h (Z_{x,t})-\bar h(Z_{y,t})\right] dt\bigg|\nonumber\\
\leq & \int_0^{1-\eta} \frac{1}{2 (1-t)} \E\left[ |a^T(ZZ^T - I_d)\,a |\; |\bar h (Z_{x,t}) - \bar h(Z_{y,t})|\right] dt \nonumber\\
&+ \left| \int_{1-\eta}^1 \frac{1}{2 (1-t)} \E\left[a^T(ZZ^T - I_d)\,a (\bar h (Z_{x,t})-\bar h (Z_{y,t}))\right] \right| dt.\nonumber
\end{align}
Using the $\alpha$-H\"older regularity of $h$ for the first part of the integral and \eqref{rentrerlafin} twice in the second part together with \eqref{estimationimp} and \eqref{constantetest}, we can bound the previous quantity by 
\begin{align}
& [h]_{\alpha} |x-y|^\alpha\E\left[|(a\cdot Z)^2-1|\right]\int_0^{1-\eta} \frac{t^{\alpha/2}}{2 (1-t)} dt + [h]_{\alpha} C\int_{1-\eta}^1 (1-t)^{-1+\alpha/2} dt\label{eq:azer}\\
\leq& [h]_{\alpha} \left(- |x-y|^\alpha \log \eta + \frac{2C}{\alpha}\eta^{\alpha/2}\right),\label{eq:azer2}
\end{align}
where to obtain \eqref{eq:azer2}, we used the facts that $\E\left[|(a\cdot Z)^2-1|\right] \leq 2$ and $t^{\alpha/2}\leq1$. Choose $\eta = |x-y|^2$ if $|x-y|\leq 1$, $\eta= 1$ otherwise to get \eqref{firstresult}.
Equation \eqref{eq:lemma2} is a straightforward reformulation since $1+|\log (u) |\geq1$. To get \eqref{eq:lemma}, simply note that for $0<\beta < \alpha $ and $0<u\leq 1$, $-\log u \leq \frac{1}{\alpha-\beta} u^{\beta-\alpha}$ and for $ 1\leq u$, $1\leq u^{\beta}$. \qed\\

\noindent {\bf Proof of Proposition \ref{prop:regu2}.} The regularity
of the Laplacian is proved in a similar manner as for the operator
norm of the Hessian; we do not detail the computations here. Let
$\alpha \in (0,1)$. We have
\begin{align*}
&|\Delta f_{|x} - \Delta f_{|y }|\\
& = \bigg| \int_0^1 \frac{1}{2 (1-t)} \E\left[\sum_{i=1}^d (Z_i^2 - 1) (\bar h (Z_{x,t})-\bar h(Z_{y,t})\right] dt\bigg|\\
\leq & \int_0^{1-\eta} \frac{1}{2 (1-t)} \E\left[\bigg|\sum_{i=1}^d (Z_i^2 - 1)\bigg| \; |\bar h (Z_{x,t}) - \bar h(Z_{y,t})|\right] dt \\
&+ \int_{1-\eta}^1 \bigg| \frac{1}{2 (1-t)} \E\left[\sum_{i=1}^d(Z_i^2 - 1) (\bar h (Z_{x,t})-\bar h (Z_{y,t}))\right] \bigg| \, dt\\
\leq & [h]_{\alpha} |x-y|^\alpha\E\left[\| Z\|^2+d\right]\int_0^{1-\eta} \frac{t^{\alpha/2}}{2 (1-t)} dt + [h]_{\alpha} \E[ (\|Z\|^2+d)\|Z\|^\alpha]\int_{1-\eta}^1 (1-t)^{-1+\alpha/2} dt\\
\leq& [h]_{\alpha} \left(- d\,|x-y|^\alpha \log \eta + \frac{2\E[ (\|Z\|^2+d)\|Z\|^\alpha]}{\alpha}\eta^{\alpha/2}\right).
\end{align*}
Note that
$$\E[ (\|Z\|^2+d)\|Z\|^\alpha] = \frac{2^{\frac{\alpha}{2}+1}\Gamma(\frac{\alpha+d}{2}+1)+d\, 2^{\frac{\alpha}{2}}\Gamma(\frac{\alpha+d}{2})}{\Gamma(d/2)}= 2^{\frac{\alpha}{2}} (\alpha+2d)\frac{\Gamma(\frac{\alpha+d}{2})}{\Gamma(d/2)},$$
and choose again $\eta = |x-y|^2$ if $|x-y|\leq1$, $\eta=1$ otherwise.

We can obtain better constants in the case $\alpha = 1$. Indeed, note that by using only one integration by parts,
\begin{align*}
\E\left[\sum_{i=1}^d(Z_i^2 - 1) (\bar h (Z_{x,t})-\bar h (Z_{y,t}))\right] &= \sqrt{1-t}\E\left[\sum_{i=1}^d Z_i (\partial_i\bar h (Z_{x,t})-\partial_i\bar h (Z_{y,t}))\right] \\
&=\sqrt{1-t} \E\left[ Z \cdot (\nabla h(Z_{t,x}) - \nabla h(Z_{t,y}) \right],
\end{align*}
whose modulus can be thus bounded by
$$2\sqrt{1-t}\,\E[ \| Z \| ] = \sqrt{1-t}\frac{2 \sqrt 2}{\sqrt \pi} \sqrt d.$$
Using this bound in the integral between $1-\eta$ and $1$, and choosing $\eta$ as in Proposition \ref{lem:ellipticregularity}, we obtain the results. \qed \\

\noindent{\bf Proof of Proposition \ref{prop:raic}.}
Let $u>0$. Denote $Z_i^{t,u} = \sqrt t\, u + \sqrt{1-t} \,Z_i$. We have
\begin{align*}
\frac{\partial^2 f_h}{\partial x \partial y}(u,u) &= - \int_0^1 \frac{1}{2(1-t)} \E[Z_1 Z_2 h ( Z_1^{t,u},Z_2^{t,u})]\,dt\\
&= - \int_0^1 \frac{1}{2(1-t)} \E\left[Z_1 Z_2 (\mathbf 1_{Z_2^{t,u} \geq Z_1^{t,u}\geq 0} Z_1^{t,u} + \mathbf 1_{Z_1^{t,u} \geq Z_2^{t,u}\geq 0} Z_2^{t,u} )\right]\,dt\\
&= - \int_0^1 \frac{1}{1-t} \E\left[Z_1 Z_2 \mathbf 1_{Z_2^{t,u} \geq Z_1^{t,u}\geq 0} \, Z_1^{t,u} \right]\,dt\\
&= - \frac{1}{\sqrt{2\pi}}\int_0^1 \frac{1}{1-t} \E\left[Z_1 e^{-\frac{Z_1^2}{2}}\mathbf 1_{Z_1^{t,u}\geq 0} \, (\sqrt t \,u + \sqrt{1-t}\, Z_1)\right]\,dt,\\
\end{align*}
since $\E[Z_2 \mathbf 1_{Z_2^{t,u} \geq Z_1^{t,u}} \; | \; Z_1] = \frac{1}{\sqrt{2\pi}} \int_{Z_1}^{+\infty} z\,e^{-z^2/2} dz = \frac{1}{\sqrt{2\pi}} e^{-Z_1^2/2}.$ Now,
\begin{align*}
&\int_0^1 \frac{1}{1-t} \E\left[Z_1 e^{-\frac{Z_1^2}{2}}\mathbf 1_{Z_1^{t,u}\geq 0} \, \sqrt t \,u \right]\,dt \\
& =  u \int_0^1 \frac{\sqrt t}{1-t} \E\left[Z_1 e^{-\frac{Z_1^2}{2}}\mathbf 1_{Z_1\geq -\sqrt{\frac{t}{1-t}}u} \right]\,dt\\
& = u \int_0^1 \frac{\sqrt t}{1-t} \int_{-\sqrt{\frac{t}{1-t}}u} ^{+\infty} z e^{-z^2}\, dz\,dt\\
& = \frac{u}{2} \int_0^1 \frac{\sqrt t}{1-t} e^{-\frac{t}{1-t}u^2} \,dt\\
& = \frac{e^{u^2}u}{2} \int_0^{1/u^2} \frac{\sqrt {1-u^2\,t}}{t} e^{-\frac{1}{t}} \,dt.
\end{align*}
It is readily checked that the last integral is equivalent to
$-2\log u$, when $u \rightarrow 0^+$. On the other hand, by Fubini's
theorem, we have
\begin{align*}
& \int_0^1 \frac{1}{\sqrt{1-t}} \E\left[Z_1^2 e^{-\frac{Z_1^2}{2}}(\mathbf 1_{Z_1^{t,u}\geq 0} - \mathbf 1_{Z_1^{t,0}\geq 0})\right]\,dt\\
& = \int_0^1 \frac{1}{\sqrt{1-t}} \int_{-\sqrt{\frac{t}{1-t}}u}^0 z^2 e^{-z^2}dz\,dt\\
& = \int_{-\infty}^0 z^2 e^{-z^2}\int_{\frac{z^2}{u^2+z^2}}^1 \frac{1}{\sqrt{1-t}} dt \, dz\\
& = \frac{u}{2}\int_{-\infty}^0 \frac{z^2}{\sqrt{u^2+z^2}} e^{-z^2} \, dz,
\end{align*}
which is a $\mathcal O(u)$ as $u\rightarrow 0^+$. This achieves the proof. \qed

\section*{Acknowledgements}
We would like to thank Guillaume Carlier for suggesting the problem to us  and subsequent useful discussions. We also thank Max Fathi for useful discussions.
The three authors were supported by the Fonds de la Recherche Scientifique - FNRS under Grant MIS F.4539.16.

\bibliographystyle{plain}
\bibliography{Berryessen}

\end{document}